\newtheorem{theorem}{Theorem}
\newtheorem{lemma}{Lemma}
\newtheorem{corollary}{Corollary}
\newtheorem{assumption}{Assumption}
\newenvironment{proof}[1][Proof]{\begin{trivlist}
\item[\hskip \labelsep {\bfseries #1}]}{\end{trivlist}}
\newenvironment{remark}[1][Remark]{\begin{trivlist}
\item[\hskip \labelsep {\bfseries #1}]}{\end{trivlist}}
\newcommand{\qed}{\nobreak \ifvmode \relax \else
      \ifdim\lastskip<1.5em \hskip-\lastskip
      \hskip1.5em plus0em minus0.5em \fi \nobreak
      \vrule height0.75em width0.5em depth0.25em\fi}
\newlength{\minipagewidth}
\newcommand{\bookbox}[1]{
\par\medskip\noindent
\framebox[\columnwidth]{
\begin{minipage}{\minipagewidth}
{#1}
\end{minipage} } \par\medskip }
\def\deq{\triangleq}
\def\a{\alpha}
\def\t{\theta}
\def\Ec{\mathcal{E}}
\def\Fc{\mathcal{F}}
\def\Gc{\mathcal{G}}
\def\Nc{\mathcal{N}}
\def\Vc{\mathcal{V}}
\def\sX{\mathsf{X}}
\def\Reals{{\mathbb{R}}}
\def\1b{\mathbf{1}}
\def\0b{\mathbf{0}}
\def\norm#1{\left\|#1\right\|}
\def\abs#1{\left|#1\right|}
\def\paren#1{\left(#1\right)}
\def\la{\langle}
\def\ra{\rangle}
\def\rb{\mathbf{r}}
\def\eb{\mathbf{e}}
\def\zb{\mathbf{z}}
\def\xb{\mathbf{x}}
\def\yb{\mathbf{y}}
\def\ub{\mathbf{u}}
\def\fb{\mathbf{f}}
\def\Nin{\mathcal{N}_i^{\text{in}}}
\def\Nout{\mathcal{N}_i^{\text{out}}}
\def\tr{\textrm{tr}}
\def\bxb{\bar{\xb}}
\def\bzb{\bar{\zb}}
\def\bz{\bar{z}}
\def\an#1{{#1}}
\def\sml#1{{#1}}
\def\argmin{\operatornamewithlimits{arg\,min}}
\begin{document}
\title{{Coordinate Dual Averaging for Decentralized Online Optimization with
Nonseparable Global Objectives}*}
\author{Soomin Lee, Angelia Nedi\'{c}, and Maxim Raginsky
\thanks{*The work has been partially supported by
the National Science Foundation under grant no.~CCF 11-11342 and
by the Office of Naval Research under grant no.~N00014-12-1-0998.
}% <-this % stops a space
\thanks{
%S.~Lee is with the Department of Mechanical Engineering and Materials Science, Duke University, Durham, NC, 27708 USA.
S.~Lee and M.~Raginsky are with the Department of Electrical and Computer Engineering,
University of Illinois at Urbana-Champaign,  Urbana, IL, 61801 USA.
A.~Nedi\'c is with the Department of Industrial and Enterprise Systems Engineering,
University of Illinois at Urbana-Champaign,  Urbana, IL, 61801 USA.
Their emails are \tt{\{lee203,angelia,maxim\}@illinois.edu}.}
\thanks{
Portions of this work were presented at the 2015 American Control Conference and at the 2015 International Symposium on Mathematical Programming.
}
}

\maketitle
\begin{abstract}
We consider a decentralized online convex optimization problem in a network of agents, where each agent controls only a coordinate (or a part) of the global decision vector.
For such a problem, we propose two decentralized variants (\textbf{ODA-C} and \textbf{ODA-PS}) of Nesterov's primal-dual algorithm
with dual averaging.
In \textbf{ODA-C}, to mitigate the disagreements on the primal-vector updates,
the agents implement a generalization of the local information-exchange dynamics recently proposed by Li and Marden \cite{Li_Marden} over a static undirected graph. In \textbf{ODA-PS},
the agents implement the broadcast-based push-sum dynamics \cite{pushsum1} over a time-varying sequence of uniformly connected digraphs. We show that the regret bounds in both cases have sublinear growth of $O(\sqrt{T})$, with the time horizon $T$,
when the stepsize is of the form $1/\sqrt{t}$ and the objective functions are Lipschitz-continuous convex functions
with Lipschitz gradients.
We also implement the proposed algorithms on a sensor network to complement our theoretical analysis. \end{abstract}

%%%%%%%%%%%%%
\section{Introduction}
%%%%%%%%%%%%%
Decentralized optimization has recently been receiving significant attention
due to the emergence of large-scale distributed algorithms in machine learning, signal processing, and control applications for
wireless communication networks, power networks, and sensor networks; see, for example,
\cite{Bullo2009,Mesbahi2010,Kar2010,Martinoli2013,Zhang2013,HTC2014}.
A central generic problem in such applications is decentralized resource allocation
for a multiagent system, where the agents collectively solve an optimization problem
in the absence of full knowledge about the overall problem structure.
In such settings, the agents are allowed to communicate to each other some relevant estimates so as to learn
the information needed for an efficient global resource allocation.
The decentralized structure of the problem is reflected in the
agents' local view of the underlying communication network, where each agent exchanges
messages only with its neighbors.

In recent literature on control and optimization,
an extensively studied decentralized resource allocation problem is one where the system objective function $f(\xb)$
is given as a sum of local objective functions, i.e., $f(\xb)=\sum_{i=1}^n f_i(\xb)$ where $f_i$ is known only to agent $i$; see,
for example~\cite{Nedic2007,Johansson2007,Nedic09a,Ram2010,Kunal2011,rabbat_allerton2012,rabbat_cdc2012,NO,alex2014,Wei2012,Wei2013,Jakovetic2011a,Jakovetic2011b,Ling2014,Bahman-opt,Jorge-dist-online,next}.
In this case, the objective function is separable across the agents,
but the agents are coupled through the resource allocation vector $\xb$.
Each agent maintains and updates its own copy of the allocation/decision vector $\xb$,
while trying to estimate an optimal decision for the system problem. \sml{The vector $\xb$ is assumed to lie in (a subset of) $\Reals^d$, where $d$ may or may not coincide with the number of agents $n$.}

Another decentralized resource allocation problem is the one where the system objective function $f(\xb)$  may not admit a natural decomposition of the form $\sum^n_{i=1}f_i(\xb)$, and the resource allocation vector $\xb = (x_1,\ldots,x_n) \in \Reals^n$ is distributed among the agents, where each agent
$i$ is responsible for maintaining and updating only a coordinate (or a part) $x_i$ of the whole vector $\xb$.
Such decentralized problems have been considered in~\cite{johnthes,distasyn,TsitsiklisAthans,LiBasar1987,RKW11} (see also the textbook~\cite{distbook}).
In the preceding work, decentralized approaches converge when the agents are using weighted averaging, or
when certain contraction conditions are satisfied. Recently, Li and Marden \cite{Li_Marden} have proposed a different algorithm with local updates, where each agent
$i$ keeps estimates for the variables $x_j$, $j\ne i$, that are controlled by all the other agents in the network.
The convergence of this algorithm relies on some contraction properties of the iterates.
Note that all the aforementioned algorithms were developed for offline optimization problems.

\sml{Our work in this paper is motivated by the ideas of Li and Marden~\cite{Li_Marden} and also by the broadcast-based subgradient push, which
was originally developed by Kempe et al.~\cite{pushsum1} and later extended in \cite{pushsum2} and in \cite{rabbat_cdc2012,NO} to distributed optimization. Specifically,
we use the local information exchange model of~\cite{Li_Marden} and \cite{pushsum1,pushsum2,rabbat_cdc2012,NO}, but
employ a different online decentralized algorithm motivated by the work of Nesterov \cite{Nesterov_dual_averaging}. We call these algorithms \textbf{ODA-C} (Online Dual Averaging with Circulation-based communication) and \textbf{ODA-PS} (Online Dual Averaging with Push-Sum based communication), respectively.}

\sml{In contrast with existing methods,
our algorithms have the following distinctive features: (1) We consider an online convex optimization problem with nondecomposable system objectives, which are  functions of a distributed resource allocation vector.
(2) In our algorithms, each agent maintains and updates its private estimate of the best global allocation vector at each time, but contributes only one coordinate to the network-wide decision vector.
(3) We provide regret bounds in terms of the true global resource allocation vector $\xb$ (rather than some estimate on $\xb$ by a single agent).
For both \textbf{ODA-C} and \textbf{ODA-PS},
we show that the regret has sublinear growth of order $O(\sqrt{T})$ in time $T$ with the stepsize of the form $1/\sqrt{t+1}$.
}

Our proposed algorithm \textbf{ODA-PS} is closest to recent papers \cite{Queens,Mesbahi}.
The papers proposed a decentralized algorithm for online convex optimization which is very similar to \textbf{ODA-PS}
in a sense that they also introduce online subgradient estimations in primal \cite{Queens} or dual \cite{Mesbahi} space into information aggregation using push-sum.
In these papers, the agents share a common decision set in $\Reals^d$, the objective functions are separable across the agents at each time (i.e., $f_t(\xb) = \sum_{i=1}^n f_t^i(\xb)$ for all $t$), and the regret is analyzed in terms of each agent's own copy of the whole decision vector $\xb \in \Reals^d$. Moreover, an additional assumption is made in \cite{Queens} that the objective functions are strongly convex.

The paper is organized as follows.
In Section~\ref{sec:problem}, we formalize the problem
 and describe how the agents interact.
In Section~\ref{sec:basicregret}, we provide an online decentralized dual-averaging algorithm in a generic form and establish a basic regret bound which can be used later for particular instantiations, namely,  for the two algorithms \textbf{ODA-C} and \textbf{ODA-PS}. These algorithms are analyzed in  Sections~\ref{sec:algo-regret}, where we establish $O(\sqrt{T})$ regret bounds under mild assumptions. In Section~\ref{sec:sim}, we demonstrate our analysis by simulations on a sensor network.
We conclude the paper with some comments in Section~\ref{sec:conclusion}.

{\bf Notation:}
\an{All vectors are column vectors. For vectors associated with agent $i$, we use a subscript $i$ such as, for example, $\xb_i,\zb_i$, etc.
We will write $x^k_i$ to denote the $k$th coordinate value of a vector $\xb_i$.
We will work with the Euclidean norm, denoted by $\|\cdot\|$. We will use $\eb_1,\ldots,\eb_n$ to denote
the unit vectors in the standard Euclidean basis of $\mathbb{R}^n$.
We use $\1b$ to denote a vector with all entries equal to 1,
while $I$ is reserved for an identity matrix of a proper size.
For any $n \ge 1$, the set of integers $\{1,\ldots,n\}$ is denoted by $[n]$.
We use $\sigma_2(A)$ to denote the second largest singular value of a matrix $A$.
}

%%%%%%%%%%%%%%%%%%%%%%%%%%%
\section{Problem formulation}\label{sec:problem}
%%%%%%%%%%%%%%%%%%%%%%%%%%%

Consider a multiagent system (network) consisting of $n$ agents, indexed by elements of the set $\Vc = [n]$. Each agent $i \in \Vc$ takes actions in an action space $\sX$, which is a closed and bounded interval of the real line.\footnote{Everything easily generalizes to $\sX$ being a compact convex subset of a multidimensional space $\Reals^d$; we mainly stick to the scalar case for simplicity.}
\an{At each time, the multiagent system incurs a time-varying cost $f_t$, which
comes from a fixed class $\Fc$ of convex functions $f : \sX^n \to \Reals$. }

The communication among agents in the network is governed by either one of the two following models:
\begin{enumerate}
\item[(G1)] An undirected connected graph $\Gc = (\Vc,\Ec)$: If agents $i$ and $j$ are connected by an edge (which we denote by $i \leftrightarrow j$), then they may exchange information with one another. Thus, each agent $i \in \Vc$ may directly communicate only with the agents in its neighborhood
    %\begin{equation*}
    $\Nc_i \deq \left\{ j \in \Vc : i \leftrightarrow j \right\} \cup \{i\}$.
    Note that agent $i$ is always contained in its own neighborhood.
    %\end{equation*}
\item[(G2)] Time-varying digraphs $\Gc(t) = (\Vc,\Ec(t))$, for $t \ge 1$: If there exists a directed link from agent $j$ to $i$ at time $t$ (which we denote by $(j,i)$), agent $j$ may send its information to agent $i$.
    We use the notation $\Nin(t)$ and $\Nout(t)$ to denote the in and out neighbors of agent $i$ at time $t$, respectively.
That is,
\[
\Nin(t) \deq \{j \mid (j,i) \in \Ec(t)\} \cup \{i\},
\]
\[
\Nout(t) \deq \{j \mid (i,j) \in \Ec(t)\} \cup \{i\}.
\]
In this case, we assume that there always exists a self-loop $(i,i)$ for all agent $i \in \Vc$. Therefore,
agent $i$ is always contained in its own neighborhood.
Also, we use $d_i(t)$ to denote the out degree of node $i$ at time $t$. i.e.,
\[
d_i(t) \deq |\Nout(t)|.
\]
We assume $B$-strong connectivity of the graphs $\Gc(t)$ with some scalar $B>0$, i.e., a graph with the following edge set
\[
\Ec_B(t) = \bigcup_{i=(t-1)B+1}^{tB} \Ec(i)
\]
is strongly connected for every $t \ge 1$.
\sml{In other words, the union of the edges appearing for $B$ consecutive time instances periodically
constructs a strongly connected graph. This assumption is required to ensure that
there exists a path from one node to every other node infinitely often
even if the underlying network topology is time-varying.}
\end{enumerate}

The network interacts with an environment according to the protocol shown in Figure~\ref{fig:OGO}. We leave the details of the signal generation process vague for the moment, except to note that the signals received by all agents at time $t$ may depend on all the information available up to time $t$ (including $f_1,\ldots,f_t$, as well as all of the local information exchanged in the network). Moreover, the environment may be adaptive, i.e., the choice of the function $f_t$ may depend on all of the data generated by the network up to time $t$.

\begin{figure}[ht]
\bookbox{\small
{\textsf{Parameters:}} base action space $\sX$; network graph $\Gc = (\Vc,\Ec)$; function class $\Fc$

\medskip\noindent
For each round $t=1,2,\ldots$:
\begin{itemize}
\item[(1)] Each agent $i \in \Vc$ selects an action $x^i(t) \in \sX$
\item[(2)] Each agent $i \in \Vc$ exchanges local information with its neighbors $\Nc_i$
\item[(3)] The environment selects the current objective $f_t \in \Fc$, and each agent receives a signal about $f_t$
\end{itemize}
}
\caption{Online optimization with global objectives and local information.}
\label{fig:OGO}
\end{figure}

Let us denote the network action at time $t$ by
\begin{align}\label{eq:net-ac}
\xb(t) = (x^1(t),\ldots,x^n(t)) \in \sX^n.
\end{align}
We consider the \textit{network regret} \an{$R(T)$ at an arbitrary time horizon $T\ge1$}:
\begin{align}\label{eq:regret}
	R(T) \deq \sum^T_{t=1} f_t(\xb(t)) - \inf_{\yb \in \sX^n}\sum^T_{t=1}f_t(\yb).
\end{align}
\an{Thus, $R(T)$} is the difference between the total cost incurred by the network at time $T$
and the smallest total cost that could have been achieved with a single action in $\sX^n$ in hindsight
(i.e., with perfect advance knowledge of the sequence $f_1,\ldots,f_T$) and
without any restriction on the communication between the agents.
The problem is to design the rule (or policy) each agent $i \in \Vc$ should use to determine its action $x^i(t)$
based on the local information available to it at time $t$, such that the regret in \eqref{eq:regret} is
(a) sublinear as a function of the time horizon $T$ and
(b) exhibits ``reasonable'' dependence on the number of agents $n$ and on the topology of the communication graphs.

\sml{
The regret in (\ref{eq:regret}) is defined over the true network actions of individual agents, i.e., $x^i(t)$'s, rather than in terms of some estimates of $\mathbf{x}(t)$ by individual agents. This notion of regret, which, to the best of our knowledge has been first introduced in \cite{RKW11}, is inspired by the literature on team decision theory and decentralized control problems: The online optimization is performed by a \textit{team} of cooperating agents facing a time-varying sequence of global objective functions $f_t$, which are nondecomposable (in contrast to decomposable objectives $\sum_i f_t^i(\mathbf{x})$, where $f_t^i$ is only revealed to agent $i$). Communication among agents is local, as dictated by the network topology, so  no agent has all the information in order to compute a good global decision vector $\xb(t)$. By comparing the cumulative performance of the decentralized system to the best \textit{centralized} decision achievable in hindsight, the regret in \eqref{eq:regret} captures the effect of decentralization. It also calls for analysis techniques that are different from existing methods in the literature.
}

%%%%%%%%%%%%%%%%%%%%%%%%%%%%%%%%%%%%%%%
\section{The basic algorithm and regret bound}\label{sec:basicregret}
%%%%%%%%%%%%%%%%%%%%%%%%%%%%%%%%%%%%%%%

We now introduce a generic algorithm for solving the decentralized online optimization problem defined in Section \ref{sec:problem}. The algorithm uses the dual-averaging subgradient method of Nesterov \cite{Nesterov_dual_averaging} as an optimization subroutine.

\sml{Each agent $i \in \Vc$ generates a sequence $\{ \xb_i(t),\zb_i(t) \}^\infty_{t=1}$ in $\sX^n \times \Reals^n$, where the primal iterates
$$
\xb_i(t) = (x^1_i(t),\ldots,x^n_i(t)) \in \sX^n
$$
and the dual iterates
$$
\zb_i(t) = (z^1_i(t),\ldots,z^n_i(t)) \in \Reals^n
$$
are updated recursively as follows:
\begin{subequations}
\begin{align}
z_i^k(t+1)
& = \frac{1}{r_i}\delta_i^k u_i(t) + F^k_{i,t}\left({\mathsf m}_i(t)\right),~ k \in [n] \label{eqn:gdyn1}\\
\xb_i(t+1)
& = \Pi_{\sX^n}^{\psi} \left(G_{i,t}(\zb_i(t+1)),\alpha(t)\right)\label{eqn:gdyn2}%\\
\end{align}
\end{subequations}
with the initial condition $\zb_i(0) = 0$ for all $i \in \Vc$. In the dual update \eqref{eqn:gdyn1}, $\delta_i^k$ is the Kronecker delta symbol, $r_i > 0$ is a positive weight parameter, $u_i(t) \in \mathbb{R}$ is a local update computed by agent $i$ at time $t$ based on the received signal about $f_t$, ${\mathsf m}_i(t)$ are the messages received by agent $i$ at time $t$ [from agents in $\Nc_i$ under the model (G1) or from $\Nc_i^{\text{in}}(t)$ under the model (G2)], and $F^k_{i,t}$, $k \in [n]$, are real-valued mappings that perform local averaging of ${\mathsf m}_i(t)$. In the primal update \eqref{eqn:gdyn2}, $G_{i,t} : \Reals^n \to \Reals^n$ is a mapping on dual iterates, $\{\alpha(t)\}^\infty_{t=0}$ is a nonincreasing sequence of positive step sizes, and the mapping $\Pi^\psi_{\sX^n} : \Reals^n \times (0,\infty) \to \sX^n$ is defined by
\begin{align}
	\Pi_{\sX^n}^{\psi} (\zb,\alpha) \deq \argmin_{\xb \in \sX^n} \left\{\la \zb, \xb \ra + \frac{1}{\alpha}\psi(\xb)\right\},
\end{align}
where $\psi : \sX^n \to \Reals^+$ is
a nonnegative \textit{proximal function}. We assume that $\psi$ is $1$-strongly convex with respect to the Euclidean norm $\| \cdot \|$, i.e., for any $\xb,\yb \in \sX^n$ we have
\begin{align}
	\psi(\yb) \ge \psi(\xb) + \la \bar{\nabla}\psi(\xb),\yb - \xb \ra + \frac{1}{2} \| \xb - \yb \|^2,
\end{align}
where $\bar{\nabla}\psi$ denotes an arbitrary subgradient of $\psi$.}

\sml{
The dual iterate $\zb_i(t)$ computed by agent $i$ at time $t$ will be an estimate of the ``running average of the subgradients'' as seen by agent $i$,
and will constitute an approximation of the true centralized dual-averaging subgradient update of Nesterov's algorithm.
The messages from $\Nc_i$ entering into the dual-space dynamics
are crucial for mitigating any disagreement between the agents' local estimates of what the network action should be.
The primal iterate $\xb_i(t)$ of agent $i$ at time $t$ is an approximation of
the true centralized primal point for the subgradient evaluation.
}

\sml{Note that in \eqref{eqn:gdyn1} the local update $u_i(t)$ based on the signal about $f_t$ affects
affects only the $i$th coordinate of the dual iterate $\zb_i(t+1)$,
while all other coordinates with  $k \neq i$ remain untouched except for the averaging.
The action of agent $i$ at time $t$ is then given by
\begin{align*}
	x^i(t) = x^i_i(t),
\end{align*}
i.e., by the $i$th component of the vector $\xb_i(t)$.}

\sml{A concrete realization of the algorithm (\ref{eqn:gdyn1})-(\ref{eqn:gdyn2}) requires specification of the rules for computing the local update $u_i(t)$, the messages exchanged by the agents, and the mappings $F^k_{i,t}$ and $G_{i,t}$. In this paper, we present two different instantiations of this algorithm, namely, the circulation-based method inspired by \cite{Li_Marden} and the push-sum based method inspired by \cite{pushsum1,pushsum2,rabbat_cdc2012,NO}.
We call these algorithms \textbf{ODA-C} (Online Dual Averaing with Circulation-based communication) and \textbf{ODA-PS} (Online Dual Averaing with Push-Sum based communication)
and detail them in Section \ref{sec:algo-regret} and \ref{sec:algo-regret2}, respectively.
}

We now present a basic regret bound that can be used for any generic algorithm of the form (\ref{eqn:gdyn1})-(\ref{eqn:gdyn2}) under the following assumption:
\begin{assumption}\label{assume:Lipschitz}
All functions $f \in \Fc$ are Lipschitz continuous with a constant $L$:
\begin{align*}
	| f(\xb) - f(\yb) | \le L \| \xb - \yb \|\quad \hbox{for all }\xb,\yb \in \sX^n.
\end{align*}
\end{assumption}

\begin{theorem}\label{thm:main} Let $\{\xb_i(t)\}^\infty_{t=1} \subset \sX^n$, $i \in \Vc$, be the sequences of the agents' primal iterates, let $\{\ub(t)\}^\infty_{t = 1}$ with $\ub(t)=(u_1(t),\ldots,u_n(t))$ be the sequence of the agents' local updates, and let $\{\bar\xb(t)\}^\infty_{t= 1} \subset \sX^n$ be generated as
\begin{equation}\label{eq:xu}
\bar\xb(t+1) = \Pi_{\sX^n}^{\psi}\left(\sum_{s=0}^t \ub(s),\a(t)\right).
\end{equation}
Then, under Assumption \ref{assume:Lipschitz},
the \textit{network regret} $R(T)$ in (\ref{eq:regret}) can be upper-bounded in terms of $\ub(t)$ and $\bxb(t)$ as \an{follows: for each $T\ge1$,}
	\begin{align*}
		&\an{R(T)} \le  \underbrace{\frac{1}{2}\sum^T_{t=1} \alpha(t-1) \| \ub(t) \|^2}_{\textrm{(E1)}} + \frac{C}{\alpha(T)} \nonumber \\
		& + \underbrace{L \sum^T_{t=1} \sum^n_{i=1} \| \xb_i(t) - \bxb(t) \|}_{\textrm{(E2)}}
		+ \underbrace{\sqrt{n} D_\sX \sum^T_{t=1} \| \nabla f_t(\bxb(t)) - \ub(t) \|}_{\textrm{(E3)}},
	\end{align*}
where $D_\sX \deq \sup_{x,y \in \sX}|x-y|$ is the diameter of the set~$\sX$, and $C \deq \sup_{x \in \sX^n}|\psi(x)|$.
\end{theorem}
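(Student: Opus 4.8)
The plan is to measure the network's performance against the auxiliary centralized dual-averaging sequence $\bxb(t)$ defined in \eqref{eq:xu}, and then to control the gap between the realized actions and this idealized iterate. Fixing a minimizer $\yb^\star \in \sX^n$ of $\sum_{t=1}^T f_t(\cdot)$ (which exists by compactness of $\sX^n$ and continuity of $f_t$), I would first split each round's instantaneous regret as
\[
f_t(\xb(t)) - f_t(\yb^\star) = \underbrace{\left(f_t(\xb(t)) - f_t(\bxb(t))\right)}_{\text{disagreement}} + \underbrace{\left(f_t(\bxb(t)) - f_t(\yb^\star)\right)}_{\text{centralized}}.
\]
The disagreement term quantifies how far the realized network action is from the centralized iterate, while the centralized term is exactly the regret of running Nesterov's dual averaging with the aggregated updates $\ub(t)$ substituted for true subgradients. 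Summing over $t$ and bounding each piece will produce the four terms in the statement.

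For the disagreement term I would invoke Assumption~\ref{assume:Lipschitz} to get $f_t(\xb(t)) - f_t(\bxb(t)) \le L\,\norm{\xb(t) - \bxb(t)}$. The key observation is that the $i$th coordinate of $\xb(t)$ is $x^i_i(t)$, the $i$th component of agent $i$'s own iterate, so coordinatewise $\abs{x^i_i(t) - \bar x^i(t)} \le \norm{\xb_i(t) - \bxb(t)}$; hence
\[
\norm{\xb(t) - \bxb(t)} \le \sqrt{\sum_{i=1}^n \norm{\xb_i(t) - \bxb(t)}^2} \le \sum_{i=1}^n \norm{\xb_i(t) - \bxb(t)},
\]
where the last step uses $\norm{\cdot}_2 \le \norm{\cdot}_1$. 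Multiplying by $L$ and summing over $t$ reproduces the consensus term (E2).

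For the centralized term, convexity of $f_t$ gives $f_t(\bxb(t)) - f_t(\yb^\star) \le \la \nabla f_t(\bxb(t)), \bxb(t) - \yb^\star \ra$, which I would split as $\la \ub(t), \bxb(t) - \yb^\star\ra + \la \nabla f_t(\bxb(t)) - \ub(t), \bxb(t) - \yb^\star\ra$. For the second inner product, Cauchy--Schwarz together with the diameter bound $\norm{\bxb(t) - \yb^\star} \le \sqrt{n}\, D_\sX$ (the Euclidean diameter of $\sX^n$) yields, after summation, exactly the estimation term (E3). The remaining sum $\sum_{t=1}^T \la \ub(t), \bxb(t) - \yb^\star\ra$ is the heart of the argument: since $\bxb(t)$ is precisely the dual-averaging iterate generated by the surrogate ``gradients'' $\ub(t)$ through \eqref{eq:xu}, I would apply the standard dual-averaging regret lemma to bound it by $\tfrac12\sum_{t=1}^T \alpha(t-1)\norm{\ub(t)}^2 + \psi(\yb^\star)/\alpha(T)$, and then use $\psi(\yb^\star) \le C$ to replace the last quantity by $C/\alpha(T)$; these deliver (E1) and the $C/\alpha(T)$ term.

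I expect the main obstacle to be the dual-averaging regret lemma underpinning the last step. Its proof follows the standard conjugate/telescoping route: one tracks the Fenchel-type potential $\psi^*\!\big(-\sum_{s}\ub(s)\big)$ associated with the running sum of updates, uses the $1$-strong convexity of $\psi$ to show that its increments are controlled by $\tfrac12\alpha(t-1)\norm{\ub(t)}^2$, and telescopes. The remaining care is bookkeeping around the stepsize indexing (the factor $\alpha(t-1)$ rather than $\alpha(t)$) and the nonincreasing-stepsize property; assembling the four bounds then yields the claimed inequality for every $T \ge 1$.
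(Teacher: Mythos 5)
Your proposal is correct and follows essentially the same route as the paper: the same split of $f_t(\xb(t))-f_t(\yb)$ through $\bxb(t)$, the same coordinatewise bound giving (E2), the same decomposition $\nabla f_t(\bxb(t)) = \ub(t) + (\nabla f_t(\bxb(t))-\ub(t))$ with Cauchy--Schwarz and the diameter $\sqrt{n}D_\sX$ for (E3), and the same standard dual-averaging regret lemma (the paper cites \cite[Lemma~3]{Duchi_etal_dis_ave} rather than reproving it) for (E1) and the $C/\alpha(T)$ term. The only cosmetic differences are that you fix a minimizer $\yb^\star$ up front instead of taking a supremum over $\yb$ at the end, and you pass through $\sqrt{\sum_i\|\xb_i(t)-\bxb(t)\|^2}$ instead of the triangle inequality; neither changes the argument.
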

\begin{remark} Since $\psi$ is a continuous function on the compact set $\sX^n$, $C < \infty$ by the Weierstrass theorem.\end{remark}

\begin{proof}
\an{For any $t$ and any $\yb \in \sX^n$} we can write
	\begin{align}
		&f_t(\xb(t)) - f_t(\yb) \nonumber\\
		&= f_t(\xb(t)) - f_t(\bxb(t)) + f_t(\bxb(t)) - f_t(\yb)\nonumber \\
		&\le \la \nabla f_t(\xb(t)), \xb(t) - \bxb(t) \ra + \la \nabla f_t (\bxb(t)), \bxb(t) - \yb \ra \nonumber \\
		&\le L \| \xb(t) - \bxb(t) \| + \la \nabla f_t (\bxb(t)), \bxb(t) - \yb \ra,
		 \label{eqn:regret_bound_1}
	\end{align}
	where the second step follows from convexity of $f_t$, while the last step uses the fact that all $f \in \Fc$ are $L$-Lipschitz.
	\an{Recalling that $\xb(t)$ is the network action vector (see~\eqref{eq:net-ac}),}
	we have the following for the first term in \eqref{eqn:regret_bound_1}:
	\begin{align}
		\| \xb(t) - \bxb(t) \| &= \left\| \sum^n_{i=1} \left(x^i(t) - \bar{x}^i(t) \right) \eb_i \right\| \nonumber \\
		&\le \sum^n_{i=1} \| \xb_i(t) - \bxb(t) \|, \label{eq:eqn_regret_bound_1a}
	\end{align}
	where the equality follows from the definition of $\xb(t)$ in (\ref{eq:net-ac}) and $\bxb(t) = (\bar x^1(t),\ldots,\bar x^n(t))$.
	
	 The second term in~\eqref{eqn:regret_bound_1} can be further expanded as
	\begin{align}
	&	\la \nabla f_t (\bxb(t)), \bxb(t) - \yb \ra \nonumber\\
	&= \la \ub(t), \bxb(t) - \yb \ra + \la \nabla f_t(\bxb(t)) - \ub(t), \bxb(t) - \yb \ra. \label{eqn:regret_bound_2}
	\end{align}
Now, from relation (\ref{eq:xu}) we obtain
\begin{align}
	\bxb(t+1)
	&= \argmin_{\xb \in \sX^n} \left\{ \sum^t_{s=0} \la \ub(s), \xb \ra + \frac{1}{\alpha(t)}\psi(\xb)\right\}. \nonumber
\end{align}
Therefore, by \cite[Lemma~3]{Duchi_etal_dis_ave}, we can write
\begin{align}\label{eqn:regret_bound_3}
	\sum^T_{t=1} \la \ub(t), \bxb(t) - \yb \ra \le \frac{1}{2}\sum^T_{t=1} \alpha(t-1) \| \ub(t) \|^2 + \frac{\psi(\yb)}{\alpha(T)}.\quad
\end{align}
For the second term on the right-hand side of \eqref{eqn:regret_bound_2}, we have
\begin{align}
&	\la \nabla f_t(\bxb(t)) - \ub(t), \bxb(t) - \yb \ra \nonumber\\
&\qquad \le \| \bxb(t) - \yb \| \| \nabla f_t(\bxb(t)) - \ub(t) \| \nonumber \\
&\qquad \le \sqrt{n} D_\sX \| \nabla f_t(\bxb(t)) - \ub(t) \|. \label{eqn:regret_bound_4a}
\end{align}
Combining the estimates in Eqs.~\eqref{eqn:regret_bound_1}-\eqref{eqn:regret_bound_4a} and taking the supremum over all $\yb \in \sX^n$, we get the desired result.~$\square$
\end{proof}

\noindent\sml{Theorem~\ref{thm:main} indicates that the regret will be small provided that
\begin{itemize}
\item[(E1)] The squared norms $\| \ub(t) \|^2$ remain bounded.
\item[(E2)] The agents' primal variables $\xb_i(t)$ do not drift too much from the centralized vector $\bxb(t)$.
\item[(E3)] The vectors $\ub(t)$ stay close to the gradients $\nabla f_t(\bxb(t))$.
\end{itemize}
This theorem plays an important role in the sequel, since it provides guidelines for designing the update rule $u_i(t)$ and the mappings $F^k_{i,t}(\cdot)$ and $G_{i,t}(\cdot)$. We will also see later that the centralized vector $\bar\xb(t)$ represents a ``mean field'' of the primal iterates $\xb_i(t)$ for $i \in \Vc$ at time $t$.}

%%%%%%%%%%%%%%%%%%%%%%%%%%%%%%%%%%%%%%%
\section{\textbf{ODA-C} and its regret bound}\label{sec:algo-regret}
%%%%%%%%%%%%%%%%%%%%%%%%%%%%%%%%%%%%%%%
We now introduce a decentralized online optimization algorithm which uses
a circulation-based framework %by \cite{Li_Marden}
for its dual update rule \eqref{eqn:gdyn1}. We refer to this algorithm as \textbf{ODA-C} (Online Dual Averaing with Circulation-based communication).
\textbf{ODA-C} uses the network model (G1) for its communication.

\subsection{\textbf{ODA-C}}
Fix a vector $\rb = (r_1,\ldots,r_n)$ of positive weights and a nonnegative $n \times n$ matrix $M$, such that $M_{ij} \neq 0$ only if $j \in \Nc_i$, satisfying the following symmetry condition:
\begin{align}\label{eq:symmetry}
	r_i M_{ij} = r_j M_{ji}, \qquad i,j \in \Vc.
\end{align}
Then, \textbf{ODA-C} uses the following instantiation of the update rules in (\ref{eqn:gdyn1})-(\ref{eqn:gdyn2}):
\begin{subequations}
\begin{align}
z_i^k(t+1)
& = \frac{1}{r_i} \delta_i^k u_i(t) + z_i^k(t)  \nonumber\\
& \quad  + \sum^n_{j=1} M_{ij}\left( v_{j\to i}^k (t) -  v_{i\to j}^k (t)\right), \,\, k \in [n]\label{eqn:dyn1}\\
\xb_i(t+1)
& = \Pi_{\sX^n}^{\psi} \left(\zb_i(t+1),\alpha(t)\right),\label{eqn:dyn2}%\\
\end{align}
\end{subequations}
where $(v^1_{j \to i}(t),\ldots,v^n_{j \to i}(t)) \in \Reals^n$ represents a vector of messages transmitted by agent $j$ to agent $i$, provided that $j \in \Nc_i$. \sml{Since $i \in \Nc_i$, we may include the previous dual iterate $\zb_i(t)$ and the outgoing messages $v^k_{i \to j}(t)$ in ${\mathsf m}_i(t)$.} The dual update rule \eqref{eqn:dyn1} is inspired by the state dynamics proposed by Li and Marden \cite{Li_Marden}, whereas the primal update rule \eqref{eqn:dyn2} is exactly what one has in Nesterov's scheme~\cite{Nesterov_dual_averaging}.

To complete the description of the algorithm, we must specify the update policies $\{u_i(t)\}$ and the messages $\{v^k_{i \to j}(t)\}$. We assume that all agents receive a complete description of $f_t$. Agent $i$ then computes
\begin{align}\label{eqn:gradient_estimate}
		u_i(t) &= \la \nabla f_t(\xb_i(t)), \eb_i \ra,~ i \in [n],~t\ge 0.
\end{align}
and  feeds this signal
back into the dynamics (\ref{eqn:dyn1}). Note, however, that the execution of the algorithm will not change if the agents never directly learn the full function $f_t$, nor even the full gradient $\nabla f_t(\xb(t))$, but instead receive the local gradient signal $\nabla f_t(\xb_i(t))$. The messages $v_{i\to j}(t)$ take the form
\begin{align}
	& v^k_{i \to j}(t) =  z^k_i(t)\label{eqn:Markov_policy}
\end{align}
for all $t$ and all agents $i,j \in \Vc$ \an{with $j \in \Nc_i$}.

\subsection{Regret of \textbf{ODA-C} with local gradient signals}
Let $\bar{\zb}(t) = (\bar{z}^1(t),\ldots,\bar{z}^n(t)).$ Our regret analysis rests on the following simple but important fact:
\begin{lemma}\label{lm:mean_field}
The weighted sum
\begin{align*}
	\bzb(t) \deq \sum^n_{i=1} r_i \zb_i(t)
\end{align*}
evolves according to the linear dynamics
\begin{align}\label{eqn:zbar}
	\bzb(t+1) = \bzb(t) + \ub(t),
\end{align}
where $\ub(t) = \big( u_1(t),\ldots,u_n(t)\big)$.
\end{lemma}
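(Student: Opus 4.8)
The goal is to show that the weighted sum $\bzb(t) = \sum_{i=1}^n r_i \zb_i(t)$ satisfies the clean recursion $\bzb(t+1) = \bzb(t) + \ub(t)$. The plan is to compute $\bzb(t+1)$ coordinate by coordinate directly from the dual update rule \eqref{eqn:dyn1}, exploiting the symmetry condition \eqref{eq:symmetry} to make the message cross-terms cancel.

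First I would fix a coordinate $k \in [n]$ and write out the $k$th component of $\bzb(t+1)$, namely $\bar z^k(t+1) = \sum_{i=1}^n r_i z_i^k(t+1)$. Substituting \eqref{eqn:dyn1} splits this into three pieces: the local-update term $\sum_i r_i \cdot \frac{1}{r_i}\delta_i^k u_i(t)$, the previous-iterate term $\sum_i r_i z_i^k(t)$, and the message-exchange term $\sum_i r_i \sum_j M_{ij}\bigl(v_{j\to i}^k(t) - v_{i\to j}^k(t)\bigr)$. The first term collapses immediately: the weights $r_i$ cancel against $1/r_i$, and the Kronecker delta picks out exactly $u_k(t)$, so this piece equals $u_k(t)$, which is the $k$th component of $\ub(t)$. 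The second term is by definition $\bar z^k(t)$, the $k$th component of $\bzb(t)$.

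The main step is to show the message-exchange term vanishes. I would expand it as $\sum_{i,j} r_i M_{ij} v_{j\to i}^k(t) - \sum_{i,j} r_i M_{ij} v_{i\to j}^k(t)$. In the second double sum I would swap the roles of the indices $i$ and $j$, rewriting it as $\sum_{i,j} r_j M_{ji} v_{j\to i}^k(t)$. Now the two sums have the same summand structure, and by the symmetry condition \eqref{eq:symmetry} we have $r_j M_{ji} = r_i M_{ij}$, so the two double sums are term-by-term equal and their difference is zero. This is the crux of the argument, and I expect it to be the only nontrivial point — everything else is bookkeeping. The cancellation is precisely why the symmetry condition \eqref{eq:symmetry} was imposed on $M$ and $\rb$ in the first place.

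Combining the three pieces gives $\bar z^k(t+1) = u_k(t) + \bar z^k(t)$ for every $k \in [n]$, which is exactly the coordinatewise statement of \eqref{eqn:zbar}. Since this holds for all coordinates, the vector identity $\bzb(t+1) = \bzb(t) + \ub(t)$ follows, completing the proof.
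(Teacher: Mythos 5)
Your proof is correct and follows essentially the same route as the paper: the paper packages your index-swap cancellation as the statement that the Frobenius pairing of the symmetric matrix $\tilde{M}_{ij}=r_iM_{ij}$ with the skew-symmetric message-difference matrix vanishes, which is exactly your term-by-term cancellation via \eqref{eq:symmetry} written in matrix form. No gaps.
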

\begin{remark}
We observe that the relation in~\eqref{eqn:zbar} holds regardless of the choices of decisions
$v^k_{j\to i}(t)$ and $v^k_{i\to j}(t)$.
Moreover, we point out that if $\ub(t) = \nabla f_t(\xb(t))$, then the combination of \eqref{eqn:zbar} and \eqref{eqn:dyn2} will reduce to a centralized online variant of Nesterov's scheme~\cite{Xiao_RDA}.
\end{remark}
\begin{proof} Let $V^k(t)$ denote the $n \times n$ matrix with entries $[V^k(t)]_{ij} = v^k_{j\to i}(t) - v^k_{i \to j}(t)$. Then
\begin{align*}
	\bz^k(t+1) &= \sum^n_{i=1}r_i z^k_i(t+1) \\
	&= \sum^n_{i=1} r_i \left\{ z^k_i(t) + \frac{1}{r_i} \delta^k_i u_i(t) + \sum^n_{j=1}M_{ij} [V^k(t)]_{ij}\right\}
 \\
	&= \bz^k(t) + u_k(t) + \tr [\tilde{M}V^k(t)],
\end{align*}
where $\tilde{M}$ is an $n\times n$ matrix with entries $\tilde{M}_{ij} = r_i M_{ij}$. Since $\tilde{M}$ is a symmetric matrix, by \eqref{eq:symmetry}, and $V^k(t)$ is skew-symmetric, $\tr [\tilde{M} V^k(t)] = 0$, so we obtain \eqref{eqn:zbar}.~$\square$
\end{proof}

\noindent\sml{Lemma \ref{lm:mean_field} indicates that the vector $\bzb(t)$ can be seen as a ``mean field''
of the local dual iterates $\zb_i(t)$ for $i \in \Vc$ at time $t$.
Also, if we define
\[
\bxb(t+1) \deq \Pi^\psi_{\sX^n}(\bzb(t+1),\alpha(t)),
\]
then from relation (\ref{eqn:zbar}) we have
\[
\bxb(t+1) = \Pi^\psi_{\sX^n}\paren{\sum_{s=0}^t\ub(s),\alpha(t)},
\]
which coincides with relation (\ref{eq:xu}) in Theorem~\ref{thm:main}.
This allows us to make use of Theorem~\ref{thm:main} in analyzing the regret of this algorithm. Furthermore, the definition of $\bar{\xb}(t)$ and relation \eqref{eqn:gradient_estimate}
indicate that $\ub(t)$ will stay close to the centralized gradient $\nabla f_t(\bar{\xb}(t))$,
and as a consequence, the errors (E1) and (E3) in Theorem~\ref{thm:main} will remain small.
}

We now particularize the bound in Theorem \ref{thm:main} to this scenario under the following additional assumption:
\begin{assumption}\label{assume:Lipgrad}
All functions $f \in \Fc$ are differentiable and have Lipschitz continuous gradients with constant $G$:
\begin{align*}
	\| \nabla f(\xb) - \nabla f(\yb) \| \le G \| \xb - \yb \|, \quad \forall f \in \Fc;\, \xb,\yb \in \sX^n.
\end{align*}
\end{assumption}

\begin{theorem}\label{thm:local_grad_signals}
Under Assumptions~\ref{assume:Lipschitz}--\ref{assume:Lipgrad}, the regret of any algorithm of the form (\ref{eqn:dyn1})-(\ref{eqn:dyn2}), and with $\ub(t)$ computed according to \eqref{eqn:gradient_estimate}, can be upper-bounded as follows:
	\begin{align*} %\label{eqn:regret_bound_gradest}
		\an{R(T)} & \le  \frac{nL^2}{2}\sum^T_{t=1} \alpha(t-1) + \frac{C}{\alpha(T)} \nonumber\\
		& \quad + \left(L + \sqrt{n}GD_\sX\right)\sum^T_{t=1} \alpha(t-1) \sum^n_{i=1} \| \zb_i(t) - \bzb(t) \|.
	\end{align*}
\end{theorem}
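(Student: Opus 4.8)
The plan is to specialize the generic bound of Theorem~\ref{thm:main} by controlling each of the three error terms (E1)--(E3) using the concrete structure of \textbf{ODA-C}. The hypotheses of Theorem~\ref{thm:main} are met because, as noted after Lemma~\ref{lm:mean_field}, the mean-field point $\bxb(t) = \Pi^\psi_{\sX^n}(\bzb(t),\alpha(t-1))$ satisfies precisely the defining relation~\eqref{eq:xu}. Since the target bound is expressed purely in terms of the \emph{dual} disagreement $\norm{\zb_i(t) - \bzb(t)}$, the heart of the argument will be converting the primal quantities appearing in (E2) and (E3) into dual ones.

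For (E1), I would use that by~\eqref{eqn:gradient_estimate} the local update $u_i(t) = \la \nabla f_t(\xb_i(t)), \eb_i\ra$ is a single coordinate of a gradient. By Assumption~\ref{assume:Lipschitz} (together with differentiability from Assumption~\ref{assume:Lipgrad}) we have $\norm{\nabla f_t(\cdot)} \le L$, so $\abs{u_i(t)} \le L$ for every $i$, and hence $\norm{\ub(t)}^2 = \sum_{i=1}^n u_i(t)^2 \le nL^2$. Feeding this into (E1) produces the first term $\tfrac{nL^2}{2}\sum_{t=1}^T \alpha(t-1)$, while the term $C/\alpha(T)$ carries over unchanged.

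The main technical step, which handles both (E2) and (E3), is the fact that the prox map $\zb \mapsto \Pi^\psi_{\sX^n}(\zb,\alpha)$ is $\alpha$-Lipschitz. This follows from the $1$-strong convexity of $\psi$: writing the first-order optimality conditions for two inputs $\zb_1,\zb_2$, adding them, and invoking strong convexity gives $\norm{\Pi^\psi_{\sX^n}(\zb_1,\alpha) - \Pi^\psi_{\sX^n}(\zb_2,\alpha)} \le \alpha \norm{\zb_1 - \zb_2}$. Because $\xb_i(t) = \Pi^\psi_{\sX^n}(\zb_i(t),\alpha(t-1))$ by~\eqref{eqn:dyn2} and $\bxb(t) = \Pi^\psi_{\sX^n}(\bzb(t),\alpha(t-1))$, this yields the pointwise estimate $\norm{\xb_i(t) - \bxb(t)} \le \alpha(t-1)\norm{\zb_i(t) - \bzb(t)}$, which immediately turns (E2) into $L \sum_{t=1}^T \alpha(t-1)\sum_{i=1}^n \norm{\zb_i(t) - \bzb(t)}$.

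For (E3), I would compare $\ub(t)$ with $\nabla f_t(\bxb(t))$ coordinate by coordinate: the $i$th entry of $\ub(t)$ is $[\nabla f_t(\xb_i(t))]_i$, whereas that of $\nabla f_t(\bxb(t))$ is $[\nabla f_t(\bxb(t))]_i$, and by Assumption~\ref{assume:Lipgrad} these differ by at most $G\norm{\xb_i(t) - \bxb(t)}$. Summing the squares and using $\sqrt{\sum_i a_i^2} \le \sum_i a_i$ for $a_i \ge 0$ gives $\norm{\nabla f_t(\bxb(t)) - \ub(t)} \le G\sum_{i=1}^n \norm{\xb_i(t) - \bxb(t)}$, and applying the prox Lipschitz bound once more produces $\sqrt{n}\,G D_\sX \sum_{t=1}^T \alpha(t-1)\sum_{i=1}^n \norm{\zb_i(t) - \bzb(t)}$. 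Adding the contributions of (E2) and (E3) collects the coefficient $L + \sqrt{n}GD_\sX$, matching the claimed bound exactly. The only genuine obstacle is the $\alpha$-Lipschitz continuity of the prox map; once that is established, everything else is a coordinate-wise application of Assumptions~\ref{assume:Lipschitz} and~\ref{assume:Lipgrad}.
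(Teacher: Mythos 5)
Your proposal is correct and follows essentially the same route as the paper's proof: bound $\norm{\ub(t)}^2$ by $nL^2$ via the $L$-Lipschitz property, control (E3) coordinatewise using the $G$-Lipschitz gradients, and convert both (E2) and (E3) into dual disagreement terms via the $\alpha$-Lipschitz continuity of $\zb \mapsto \Pi^\psi_{\sX^n}(\zb,\alpha)$ (which the paper simply cites from \cite[Lemma~1]{Nesterov_dual_averaging} rather than rederiving from strong convexity as you sketch). No gaps.
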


\begin{proof}
The terms on the right-hand side of the bound in Theorem \ref{thm:main} can be further estimated as follows.
Since each $f_t \in \Fc$ is $L$-Lipschitz,
	\begin{align*}
		\| \ub(t) \|^2 &= \sum^n_{i=1} \left| \la \nabla f_t(\xb_i(t)), \eb_i \ra\right|^2 \\
		& \le \sum^n_{i=1} \| \nabla f_t(\xb_i(t)) \|^2 \\
		& \le nL^2.
	\end{align*}
It remains to estimate term (E3) in Theorem \ref{thm:main}. To that end, we write
	\begin{align*}
	&\| \nabla f_t(\bxb(t)) - \ub(t) \|	\\
	&\quad=  \left\| \sum^n_{i=1} \la \nabla f_t(\bxb(t)) - \nabla f_t(\xb_i(t)), \eb_i \ra \eb_i \right\| \nonumber \\
		&\quad\le \sum^n_{i=1} \| \nabla f_t(\bxb(t)) - \nabla f_t(\xb_i(t)) \| \nonumber \\
		&\quad \le G \sum^n_{i=1} \| \bxb(t) - \xb_i(t) \|,
	\end{align*}
	where we have exploited the fact that the gradients of all $f \in \Fc$ are $G$-Lipschitz.

Now, by construction,
	\begin{align*}
		&\| \bxb(t) - \xb_i(t) \| \\
		&\quad = \left\| \Pi^\psi_{\sX^n}(\bzb(t),\alpha(t-1)) - \Pi^\psi_{\sX^n}(\zb_i(t),\alpha(t-1)) \right\| \nonumber \\
		&\quad \le \alpha(t-1) \| \bzb(t) - \zb_i(t) \|,\label{eqn:regret_bound_4}
 	\end{align*}
where the last step follows from the fact that the map $\zb \mapsto \Pi^\psi_{\sX^n}(\zb,\alpha)$ is $\alpha$-Lipschitz
	(see, e.g., \cite[Lemma~1]{Nesterov_dual_averaging}).
	Substituting these estimates into the bound in Theorem \ref{thm:main}, we get the result. $\square$%\eqref{eqn:regret_bound_gradest}.
\end{proof}
\sml{This bound indicates that, if the network-wide disagreement term behaves nicely, the regret $R(T)$ will be sublinear in $T$ with a proper choice of the step size $\a(t)$.
We illustrate this more specifically in the following corollary.}

\begin{corollary}\label{cor:regret_bound} Suppose that the policies for computing $\{u_i(t)\}$
and $\{ v^k_{i \to j}(t)\}$ are such that, for all $t$ and for any sequence $f_1,\ldots,f_T \in \Fc$,
	\begin{align*}
		\sum^n_{i=1} \| \zb_i(t) - \bzb(t) \| \le K
	\end{align*}
	for some finite constant $K > 0$ (which may depend on $n$ and on other problem parameters). Then,
	the regret of the algorithm (\ref{eqn:dyn1})-(\ref{eqn:dyn2}) is bounded by
	\begin{align*}
		\an{R(T)} \le \left[\frac{nL^2}{2} + K\left(L + \sqrt{n}GD_\sX\right)\right]\sum^T_{t=1}\alpha(t-1) + \frac{C}{\alpha(T)}.
	\end{align*}
\end{corollary}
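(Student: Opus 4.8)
The plan is to read off the claim directly from Theorem~\ref{thm:local_grad_signals} and then substitute the uniform disagreement bound. Under Assumptions~\ref{assume:Lipschitz}--\ref{assume:Lipgrad} with $\ub(t)$ given by \eqref{eqn:gradient_estimate}, that theorem already supplies
\[
R(T) \le \frac{nL^2}{2}\sum_{t=1}^T \alpha(t-1) + \frac{C}{\alpha(T)} + (L + \sqrt{n}GD_\sX)\sum_{t=1}^T \alpha(t-1)\sum_{i=1}^n \|\zb_i(t)-\bzb(t)\|.
\]
The only place where the detailed dual dynamics enter this bound is through the per-round disagreement $\sum_{i=1}^n\|\zb_i(t)-\bzb(t)\|$ appearing in the last term, so this is the only quantity I need to control.

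The single substantive step is to invoke the standing hypothesis $\sum_{i=1}^n\|\zb_i(t)-\bzb(t)\| \le K$, which is assumed to hold for every $t$ and every admissible function sequence $f_1,\dots,f_T$. Because the step sizes $\alpha(t-1)$ are nonnegative, I can bound the weighted time-sum term by term:
\[
(L + \sqrt{n}GD_\sX)\sum_{t=1}^T \alpha(t-1)\sum_{i=1}^n \|\zb_i(t)-\bzb(t)\| \le K\,(L + \sqrt{n}GD_\sX)\sum_{t=1}^T \alpha(t-1).
\]
I would then collect the two terms that carry the common factor $\sum_{t=1}^T \alpha(t-1)$---the $nL^2/2$ term inherited from Theorem~\ref{thm:local_grad_signals} and the newly bounded disagreement term---into the single coefficient $\tfrac{nL^2}{2} + K(L+\sqrt{n}GD_\sX)$, while leaving the $C/\alpha(T)$ term untouched. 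This yields exactly the stated bound.

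I do not expect any genuine obstacle here: the corollary is a direct specialization of Theorem~\ref{thm:local_grad_signals}, and the only fact used beyond that theorem is the positivity of the step sizes, which is what permits passing the uniform constant $K$ through the summation without flipping any inequality. All of the real difficulty---controlling how the disagreement $\sum_{i=1}^n\|\zb_i(t)-\bzb(t)\|$ propagates through the circulation-based dual update \eqref{eqn:dyn1}---has been deferred into the hypothesis, and will have to be discharged separately by exhibiting concrete policies $\{u_i(t)\}$ and $\{v^k_{i\to j}(t)\}$ for which such a uniform bound $K$ provably holds.
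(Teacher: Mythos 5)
Your proposal is correct and matches the paper's (implicit) argument exactly: Corollary~\ref{cor:regret_bound} is stated in the paper as an immediate consequence of Theorem~\ref{thm:local_grad_signals}, obtained precisely by substituting the uniform bound $K$ into the disagreement term and collecting the two terms carrying the factor $\sum_{t=1}^T\alpha(t-1)$. Nothing further is needed.
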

\noindent In particular, if we choose $\alpha(t) = \frac{1}{\sqrt{t+1}}$ for $t \ge 0$, then
the regret is \an{of the order} $O(\sqrt{T})$:
\begin{align*}
	R(T) \le \left[nL^2 + 2K\left(L + \sqrt{n}GD_\sX\right)\right] \sqrt{T} + C\sqrt{T+1} .
\end{align*}

%---------------------------------------------------------------
\subsection{Full regret analysis}
\label{ssec:linear_ave}
%---------------------------------------------------------------

\sml{We now show that the network-wide disagreement term is indeed upper-bounded by some constant. We recall that $M_{ij} \neq 0$ only if $j \in \Nc_i$. In addition to this, we posit the following assumptions on the pair $(\mathbf{r},M)$.
\begin{assumption}\label{assume:M}
The positive weights $r_1,\ldots,r_n$ sum to one:
\[
\sum_{i=1}^n r_i = 1 \textrm{ and } r_i > 0 \text{ for each } i \in [n].
\]
The matrix $M$ is row-stochastic, i.e.,
\[
\sum^n_{j=1}M_{ij}=1  \text{ for each } i \in [n].
\]
\end{assumption}
}
\noindent The conditions we have imposed on the pair $(\rb,M)$ are equivalent to saying that $M$ is the transition probability matrix of a reversible random walk on $\Gc$ with invariant distribution $\rb = (r_1,\ldots,r_n)$ \cite{Levin_Peres_Wilmer}.
\an{Let
\begin{align}
\zb^k(t)=(z^k_1(t),\ldots,z^k_n(t)),~k\in[n],~t\ge 0,\label{eq:zkt}
\end{align}
and $r_* \deq \min_{1 \le i \le n}r_i$. We state the following bound for $\sum_{i=1}^n \|\zb_i(t) - \bzb(t)\|^2$:}
\begin{lemma}\label{lem:disagree} \an{Under Assumptions~\ref{assume:Lipschitz} and \ref{assume:M},}
for the policy in \eqref{eqn:gradient_estimate}-\eqref{eqn:Markov_policy} we have
		\begin{align*}
\sum_{i=1}^n \|\zb_i(t) - \bzb(t)\|^2 \le \frac{n L^2}{r^3_*(1-\sqrt{1-\lambda})^2}
	\end{align*}
for every $t \ge 1$,
where
$$
\| \fb \|_\rb \deq \sqrt{ \sum^n_{i=1}r_i f_i^2}
$$
is \an{the $\rb$-weighted $\ell_2$-norm} of the vector $\fb \in \Reals^n$, and
where $\lambda$ denotes the spectral gap of $M$ \cite{Levin_Peres_Wilmer}, i.e.,
\begin{align*}
	\lambda &= \inf_{\fb \in \Reals^n,\, \la \rb, \fb \ra = 0} \frac{\| \fb \|^2_\rb - \| M \fb \|^2_\rb}{\| \fb \|^2_\rb}.
\end{align*}
\end{lemma}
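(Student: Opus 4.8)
The plan is to reduce the vector disagreement to a collection of scalar-coordinate disagreements, each driven by the row-stochastic matrix $M$, and then to exploit the spectral gap of $M$ to show that each coordinate disagreement stays bounded. First I would substitute the message policy $v^k_{i\to j}(t)=z^k_i(t)$ from \eqref{eqn:Markov_policy} into the dual update \eqref{eqn:dyn1}; since $M$ is row-stochastic (Assumption~\ref{assume:M}), the terms $\sum_j M_{ij}(z^k_j(t)-z^k_i(t))$ collapse and the recursion becomes, for each coordinate $k\in[n]$,
\[
\zb^k(t+1)=M\zb^k(t)+\frac{u_k(t)}{r_k}\eb_k,
\]
with $\zb^k(t)$ as in \eqref{eq:zkt}. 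The target quantity splits coordinatewise, $\sum_{i=1}^n\norm{\zb_i(t)-\bzb(t)}^2=\sum_{k=1}^n\norm{\zb^k(t)-\bz^k(t)\1b}^2$, so it suffices to control each coordinate separately.

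Next I would introduce the coordinate disagreement $\wb^k(t)\deq\zb^k(t)-\bz^k(t)\1b$ and derive its dynamics. Using $M\1b=\1b$ together with the mean-field identity $\bz^k(t+1)=\bz^k(t)+u_k(t)$ from Lemma~\ref{lm:mean_field}, a direct computation gives
\[
\wb^k(t+1)=M\wb^k(t)+u_k(t)\,\gb_k,\qquad \gb_k\deq\frac{1}{r_k}\eb_k-\1b.
\]
The crucial structural point is that this recursion lives entirely in the subspace $\{\fb:\la\rb,\fb\ra=0\}$ on which the spectral gap is defined: because $\sum_i r_i=1$ we have $\la\rb,\wb^k(t)\ra=\bz^k(t)-\bz^k(t)=0$, the forcing direction satisfies $\la\rb,\gb_k\ra=\tfrac{1}{r_k}r_k-1=0$, and the symmetry condition \eqref{eq:symmetry} together with row-stochasticity yields $\rb^\top M=\rb^\top$, so $M$ maps this subspace into itself. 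Hence the contraction $\norm{M\fb}_\rb\le\sqrt{1-\lambda}\,\norm{\fb}_\rb$ implied by the definition of $\lambda$ applies to every iterate and to every forced term.

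I would then iterate the recursion from the initial condition $\wb^k(0)=0$ (which follows from $\zb_i(0)=0$), obtaining $\wb^k(t)=\sum_{s=0}^{t-1}M^{t-1-s}u_k(s)\gb_k$, and bound its $\rb$-weighted norm by the triangle inequality and the contraction, giving a geometric series $\norm{\wb^k(t)}_\rb\le L\,\norm{\gb_k}_\rb\sum_{j\ge0}(\sqrt{1-\lambda})^j=\frac{L\,\norm{\gb_k}_\rb}{1-\sqrt{1-\lambda}}$. Here $|u_k(s)|\le L$ by Cauchy--Schwarz and Assumption~\ref{assume:Lipschitz} (the $L$-Lipschitz bound forces $\norm{\nabla f_s}\le L$), and a short computation gives $\norm{\gb_k}_\rb^2=(1-r_k)/r_k\le 1/r_*$. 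Finally I would pass from the weighted to the Euclidean norm via $\norm{\fb}^2\le r_*^{-1}\norm{\fb}_\rb^2$ and sum over $k$ to reach the stated bound.

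The main obstacle I anticipate is not any single estimate but the structural verification in the middle step: one must confirm that the disagreement dynamics never leave the $\rb$-orthogonal complement of $\1b$, since the factor $\sqrt{1-\lambda}$ is only guaranteed as a contraction on that subspace, and one must track the two places where a factor $r_*^{-1}$ enters (the norm of $\gb_k$ and the weighted-to-Euclidean conversion). A careful accounting in fact yields the constant $nL^2/\big(r_*^2(1-\sqrt{1-\lambda})^2\big)$; since $r_*\le 1$, this is dominated by the stated $nL^2/\big(r_*^3(1-\sqrt{1-\lambda})^2\big)$, so the claimed bound holds.
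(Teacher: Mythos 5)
Your proof is correct and follows essentially the same route as the paper's: the same coordinatewise decomposition $\sum_i\|\zb_i(t)-\bzb(t)\|^2=\sum_k\|\zb^k(t)-\bz^k(t)\1b\|^2$, the same recursion $\zb^k(t+1)=M\zb^k(t)+r_k^{-1}u_k(t)\eb_k$, and the same spectral-gap contraction in the $\rb$-weighted norm summed as a geometric series. Your bookkeeping via $\|\gb_k\|^2_\rb=(1-r_k)/r_k$ is slightly sharper than the paper's estimate $\|\eb_k-r_k\1b\|^2_\rb=r_k(1-r_k)\le 1$ followed by $1/r_k\le 1/r_*$, so you obtain the improved constant $r_*^{-2}$ in place of $r_*^{-3}$, which, as you correctly note, still implies the stated inequality since $r_*\le 1$.
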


\begin{proof} From \an{the definitions of $\zb_i(t), \bzb(t),$ and $\zb^k(t)$, we have}
\begin{align}\label{eqn:twoway}
\sum_{i=1}^n \|\zb_i(t) - \bzb(t)\|^2 = \sum_{k=1}^n \|\zb^k(t)-\bz^k(t)\1b\|^2.
\end{align}
Thus, we upper-bound the quantity on the right-hand side.

From \eqref{eqn:Markov_policy}, we can rewrite the dynamics \eqref{eqn:dyn1} as \an{follows:}
	\begin{align}\label{eqn:zvec}
		\zb^k(t+1) &= M\zb^k(t) + \frac{1}{r_k}u_k(t) \eb_k,
	\end{align}
	\an{where $\zb^k(t)$ is defined in~\eqref{eq:zkt}.}
	By unrolling the dynamics \eqref{eqn:zvec} and \eqref{eqn:zbar} from time 0 to $t$ and recalling that $\zb_i(0) = 0$ for all $i$, we obtain:
	\begin{align}\label{eqn:zvec2}
	\zb^k(t) = \frac{1}{r_k}\sum_{s=0}^{t-1} M^{t-s-1} u_k(s)\eb_k.
	\end{align}
	\an{Moreover, by the definition of $\bzb(t)$ in Eq.~\eqref{eqn:zbar},} we have
	\begin{align}\label{eqn:zslr}
	\bar{z}^k(t) =  \frac{1}{r_k}\sum_{s=0}^{t-1} r_k u_k(s).
	\end{align}
	Note that $r_k = \la \rb, \eb_k \ra$. From (\ref{eqn:zvec2}) and (\ref{eqn:zslr}), we have
	\begin{align}\label{eqn:dyn}
&	\|\zb^k(t) - \bz^k(t)\1b\| \le  \frac{1}{r_k}\sum_{s=0}^{t-1}\left\|M^{t-s-1} \eb_k - \la \rb, \eb_k \ra \1b \right\||u_k(s)|.
	\end{align}

By the properties of Markov matrices~\cite{Levin_Peres_Wilmer}, for any $\fb \in \Reals^n$,
\begin{align*}
	\left\| M^t \fb - \la \rb,\fb \ra\1b \right\|^2 &\le \frac{1}{r_*} \left\| M^t \fb - \la \rb, \fb \ra \1b \right\|^2_\rb \nonumber\\
	&\le \frac{(1-\lambda)^t}{r_*} \| \fb - \la \rb, \fb \ra \1b \|^2_\rb.
\end{align*}
Therefore,
	\begin{align}\label{eqn:right}
	&\left\|M^{t-s-1} \eb_k - \la \rb, \eb_k \ra \1b\right\|^2 \nonumber\\
	&\qquad\le  \frac{(1-\lambda)^{t-s-1}}{r_*} \left\| \eb_k - \la \rb, \eb_k \ra \1b \right\|^2_\rb \nonumber\\
	&\qquad = \frac{(1-\lambda)^{t-s-1}}{r_*} r_k (1-r_k) \nonumber \\
	&\qquad \le \frac{(1-\lambda)^{t-s-1}}{r_*}.
	\end{align}
	From relations (\ref{eqn:dyn}) and (\ref{eqn:right}), we obtain
	\begin{align*}
	\|\zb^k(t) - \bz^k(t)\1b\|^2
	&   \le  \left(\frac{1}{r^{3/2}_*} \sum_{s=0}^{t-1} (1-\lambda)^{\frac{t-s-1}{2}} |u_k(s)|\right)^2\\
	& \le  \frac{L^2}{r^{3}_*(1-\sqrt{1-\lambda})^2},
	\end{align*}
	where Assumption~\ref{assume:Lipschitz} is used in the last inequality.
	From this and relation \eqref{eqn:twoway}, we obtain
	\begin{align*}
	\sum_{i=1}^n \|\zb_i(t) - \bzb(t)\|^2
\le
	\frac{n L^2}{r^3_*(1-\sqrt{1-\lambda})^2},
\end{align*}
	\an{which proves the stated result}.~$\square$ \end{proof}

\noindent\sml{Lemma \ref{lem:disagree} captures the effect of the underlying network topology via the spectral gap $\lambda$ (also known as the Fiedler value), which captures the algebraic connectivity of the network. Since $\Gc$ is assumed to be connected, $\lambda > 0$.}

By combining Theorem \ref{thm:local_grad_signals} and Lemma \ref{lem:disagree}, we can now provide a regret bound for \textbf{ODA-C}:
\begin{theorem}\label{thm:linear_ave} \an{Let Assumptions~\ref{assume:Lipschitz}--\ref{assume:M} hold.
With the choice $\alpha(t) = \frac{1}{\sqrt{t+1}}$ for all $t\ge0$,
and under the policy \eqref{eqn:gradient_estimate}-\eqref{eqn:Markov_policy}, the distributed algorithm \textbf{ODA-C} achieves the following regret:}
	\begin{align*}
		R(T) &\le nL^2\paren{1 + \frac{2}{r^{3/2}_*(1-\sqrt{1-\lambda})}\left(1+\frac{\sqrt{n}GD_\sX}{L}\right)}\sqrt{T} \\
	& \qquad \qquad + C \sqrt{T+1},
	\end{align*}
\end{theorem}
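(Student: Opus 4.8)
The plan is to obtain Theorem~\ref{thm:linear_ave} by feeding the uniform disagreement bound of Lemma~\ref{lem:disagree} into the general regret estimate of Theorem~\ref{thm:local_grad_signals} (equivalently, into Corollary~\ref{cor:regret_bound} with an explicitly identified constant $K$). All the heavy lifting has already been done: Theorem~\ref{thm:local_grad_signals} reduces the regret to the step-size sum plus the time-summed disagreement term $\sum_t \alpha(t-1)\sum_i \|\zb_i(t)-\bzb(t)\|$, and Lemma~\ref{lem:disagree} controls the disagreement at each fixed $t$ uniformly in $t$. The only genuine gap to bridge is that Theorem~\ref{thm:local_grad_signals} needs the \emph{first} moment $\sum_{i=1}^n \|\zb_i(t)-\bzb(t)\|$, whereas Lemma~\ref{lem:disagree} delivers the \emph{second} moment $\sum_{i=1}^n \|\zb_i(t)-\bzb(t)\|^2$.

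First I would close this gap with Cauchy--Schwarz (the quadratic-mean/arithmetic-mean inequality): for the $n$ nonnegative scalars $\|\zb_i(t)-\bzb(t)\|$ one has
\begin{align*}
\paren{\sum_{i=1}^n \|\zb_i(t)-\bzb(t)\|}^2 \le n \sum_{i=1}^n \|\zb_i(t)-\bzb(t)\|^2 \le \frac{n^2 L^2}{r_*^3(1-\sqrt{1-\lambda})^2},
\end{align*}
where the last inequality is exactly Lemma~\ref{lem:disagree}. Taking square roots yields the uniform (in $t$) bound
\begin{align*}
\sum_{i=1}^n \|\zb_i(t)-\bzb(t)\| \le \frac{nL}{r_*^{3/2}(1-\sqrt{1-\lambda})} \deq K,
\end{align*}
which is precisely the hypothesis of Corollary~\ref{cor:regret_bound} with this explicit value of $K$.

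With $K$ in hand, the remainder is substitution. Corollary~\ref{cor:regret_bound} has already handled the step-size bookkeeping for the choice $\alpha(t)=1/\sqrt{t+1}$, using $\sum_{t=1}^T \alpha(t-1)=\sum_{t=1}^T t^{-1/2}\le 2\sqrt{T}$ and $C/\alpha(T)=C\sqrt{T+1}$, to give
\begin{align*}
R(T) \le \left[nL^2 + 2K\paren{L+\sqrt{n}GD_\sX}\right]\sqrt{T} + C\sqrt{T+1}.
\end{align*}
Plugging in $K=\tfrac{nL}{r_*^{3/2}(1-\sqrt{1-\lambda})}$ and factoring out $nL^2$ from the bracket gives
\begin{align*}
nL^2 + \frac{2nL\paren{L+\sqrt{n}GD_\sX}}{r_*^{3/2}(1-\sqrt{1-\lambda})}
= nL^2\paren{1 + \frac{2}{r_*^{3/2}(1-\sqrt{1-\lambda})}\paren{1+\frac{\sqrt{n}GD_\sX}{L}}},
\end{align*}
which is exactly the coefficient in the stated bound.

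I do not expect any serious obstacle here; this is a bookkeeping argument assembling two previously established results. The one place demanding care is the Cauchy--Schwarz passage from the squared-norm bound to the norm bound, since overlooking the extra factor of $\sqrt{n}$ it introduces would yield the wrong dependence on $n$ in the final constant. Everything else is algebraic simplification already anticipated by Corollary~\ref{cor:regret_bound}.
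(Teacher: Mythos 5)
Your proposal is correct and follows essentially the same route as the paper: the paper likewise passes from the second-moment bound of Lemma~\ref{lem:disagree} to the first moment via $\sum_i \|\zb_i(t)-\bzb(t)\| \le \sqrt{n\sum_i \|\zb_i(t)-\bzb(t)\|^2}$ (it labels this Jensen's inequality, which is the same QM--AM step as your Cauchy--Schwarz), identifies the same $K = nL/\bigl(r_*^{3/2}(1-\sqrt{1-\lambda})\bigr)$, and invokes Corollary~\ref{cor:regret_bound}. No gaps.
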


\begin{proof} \an{By Lemma~\ref{lem:disagree}, the averaging policy \eqref{eqn:Markov_policy} satisfies
\begin{align*}
	\sum_{i=1}^n \|\zb_i(t) - \bzb(t)\|^2
	 \le
	 \frac{n L^2}{r^3_*(1-\sqrt{1-\lambda})^2}.
\end{align*}
	Hence, by Jensen's inequality,
	\begin{align*}
		\sum_{i=1}^n \|\zb_i(t) - \bzb(t)\|&\le\sqrt{n\sum_{i=1}^n \|\zb_i(t) - \bzb(t)\|^2} \\
	&\le  \frac{nL}{r^{3/2}_*(1-\sqrt{1-\lambda})}.
\end{align*}
	Therefore, the conditions of Corollary~\ref{cor:regret_bound} hold} with
	\begin{align*}
		 K =   \frac{nL}{r^{3/2}_*(1-\sqrt{1-\lambda})},
	\end{align*}
\an{and the stated} result follows.~$\square$
\end{proof}
\sml{This shows that, for any fixed communication network $\Gc$ satisfying Assumption \ref{assume:M},
the worst-case regret is bounded by $O(\sqrt{T})$. The constants also capture the dependence on the algebraic connectivity of the network via the spectral gap $\lambda$, as well as on the network size $n$.}

%%%%%%%%%%%%%%%%%%%%%%%%%%%
\section{\textbf{ODA-PS} and its Regret Bound}\label{sec:algo-regret2}
%%%%%%%%%%%%%%%%%%%%%%%%%%%
We now introduce another decentralized online optimization algorithm which uses
the push-sum communication protocol
for its dual update rule \eqref{eqn:gdyn1}.
We refer to this algorithm as \textbf{ODA-PS} (Online Dual Averaing with Push-Sum based communication).
\textbf{ODA-PS} uses the network model (G2) for its communication.

\subsection{\textbf{ODA-PS}}
For \textbf{ODA-PS}, each agent $i$ maintains an additional scalar sequence $\{w_i(t)\}_{t=1}^{\infty} \subset \mathbb{R}$.
Then, this algorithm particularizes the update rule in (\ref{eqn:gdyn1})-(\ref{eqn:gdyn2}) as
\begin{subequations}
\begin{align}
w_i(t+1) =~& \sum_{j=1}^n [A(t)]_{ij} w_j(t)\label{eqn:algo1}\\
z_i^k(t+1) =~& n\delta_i^ku_i(t) + \sum_{j=1}^n [A(t)]_{ij} z_j^k(t) ,~k\in [n]\label{eqn:algo2}\\
\xb_i(t+1) =~& \Pi^\psi_{\sX^n}\paren{\frac{\zb_i(t+1)}{w_i(t+1)},\a(t)}\label{eqn:algo3}
\end{align}
\end{subequations}
where the weight matrix $A(t)$
is defined by the out-degrees of the in-neighbors, i.e.,
\begin{equation}\label{eqn:At}
[A(t)]_{ij} = \left\{
\begin{array}{ll}
1/d_j(t) & \text{whenever } j \in \Nin(t)\\
0 & \text{otherwise. }
\end{array}
\right.
\end{equation}
The matrix $A(t)$ is column stochastic by construction.

Note that the above update rules are based on a simple broadcast communication.
Each agent $i$ broadcasts (or \textit{pushes}) the quantities
$w_i(t)/d_i(t)$ and  $\zb_i(t)/d_i(t)$ to all of the nodes in its
out-neighborhood $\Nc_i^{\textrm{out}}(t)$.
Then, in \eqref{eqn:algo1}-\eqref{eqn:algo2}
each agent simply \textit{sums} all the received messages
 to obtain $w_i(t+1)$ and $\zb_i(t+1)$. The update
rule \eqref{eqn:algo3} can be executed locally.
\sml{Unlike \textbf{ODA-C}, the averaging matrix $A(t)$ in \textbf{ODA-PS} does not require symmetry due to this broadcast-based nature of the push-sum protocol. However, the asymmetry requires uniformity of the positive weights $r_i$ across all agents (cf.~Eq.~\eqref{eqn:gdyn1}). Here we simply use $r_i = 1/n$.
}

To complete the description of the algorithm, we must specify the update policies $\{u_i(t)\}$.
As in \textbf{ODA-C}, we assume that the signal agent $i$ gets from the environment at
time $t$ is simply the $i$-th coordinate of the gradient of $f_t$ at the
agent’s primal variable $\xb_i(t)$. Thus, we define:
\begin{equation}\label{eqn:policy1}
u_i(t) = \la \nabla f_t(\xb_i(t)),\eb_i\ra, ~ i \in [n],~t\ge 0,
\end{equation}
i.e., the update performed by agent $i$ at time $t$ is the simply the $i$-th coordinate of the gradient of $f_t$ at the agent's primal variable $\xb_i(t)$.

We assume that each agent $i$ initializes its updates with $w_i(0)=1$ and $\zb_i(0)=0$, while $u_i(0)$ can be any arbitrary value in $\mathsf{X}$.
We also recall that the local action of agent $i$ at time $t$ is given by the $i$th coordinate of $\xb_i(t)$, i.e.,
\[
x^i(t) = x_i^i(t).
\]
For notational convenience, let us denote the products of the weight matrices $A(t), \ldots, A(s)$ by $A(t:s)$, i.e.,
\[
A(t:s) \triangleq A(t) \cdots A(s) \quad \text{for all } t \ge s \ge 0.
\]
Also, we denote
\[
A(t-1:t) \triangleq I, \quad \text{for all } t \ge 1.
\]

\subsection{Regret of \textbf{ODA-PS} with local gradient signals}
For the regret analysis, we first study the dynamics of the dual iterates $\zb_i(t)$ and its ``mean field'' $\bar{\zb}(t)$ in the following lemma.
We remind that $\bar{\zb}(t) = (\bar{z}^1(t),\ldots,\bar{z}^n(t))$ and
\begin{align*}
\zb^k(t)=(z^k_1(t),\ldots,z^k_n(t)),~k\in[n].
\end{align*}

\begin{lemma}\label{lem:zbar}
Let $\zb_i(0) = 0$ for all $i \in \Vc$.
\begin{itemize}
\item[(a)]
The weighted sum
\[
\bar{\zb}(t) =  \frac{1}{n}\sum_{i=1}^n \zb_i(t)
\]
evolves according to the linear dynamics
\begin{equation*}
\bar{\zb}(t+1) = \bar{\zb}(t) + \ub(t),
\end{equation*}
where $\ub(t) = (u_1(t),\ldots,u_n(t))$.
\item[(b)] For any $i,k\in [n]$, the iterates in (\ref{eqn:algo2}) evolve according to the following dynamics
\begin{equation*}
z_i^k(t) =n \sum_{s=0}^{t-1} [A(t-1:s+1)]_{ik} u_k(s).
\end{equation*}
\end{itemize}
\end{lemma}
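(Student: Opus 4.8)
The plan is to handle the two parts separately, both by direct manipulation of the dual recursion \eqref{eqn:algo2}. Part (a) will follow from summing that update over all agents and exploiting the fact that $A(t)$ is column stochastic, whereas part (b) will follow from rewriting \eqref{eqn:algo2} as a linear vector recursion and unrolling it from the initial condition $\zb_i(0)=0$.

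For part (a), I would work one coordinate at a time. Fixing $k\in[n]$ and averaging \eqref{eqn:algo2} over $i$ gives
\begin{align*}
\bar{z}^k(t+1) = \frac{1}{n}\sum_{i=1}^n n\delta_i^k u_i(t) + \frac{1}{n}\sum_{i=1}^n\sum_{j=1}^n [A(t)]_{ij}\, z_j^k(t).
\end{align*}
The first sum collapses to $u_k(t)$ because $\delta_i^k$ is nonzero only when $i=k$. In the second sum, interchanging the order of summation and invoking column stochasticity, $\sum_{i=1}^n [A(t)]_{ij}=1$ for every $j$, leaves $\frac{1}{n}\sum_{j=1}^n z_j^k(t)=\bar{z}^k(t)$. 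Stacking over $k$ then yields $\bar{\zb}(t+1)=\bar{\zb}(t)+\ub(t)$. The only structural fact required here is the column stochasticity of $A(t)$, which is guaranteed by the construction in \eqref{eqn:At}.

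For part (b), I would first vectorize the update. Writing $\zb^k(t)=(z_1^k(t),\ldots,z_n^k(t))$ and noting that the vector $(n\delta_i^k u_i(t))_{i=1}^n$ equals $n u_k(t)\eb_k$, the recursion \eqref{eqn:algo2} becomes
\begin{align*}
\zb^k(t+1) = A(t)\zb^k(t) + n u_k(t)\eb_k.
\end{align*}
Starting from $\zb^k(0)=0$ and unrolling this inhomogeneous linear recursion, the term injected at time $s$ is propagated to time $t$ by the matrix product $A(t-1)\cdots A(s+1)=A(t-1:s+1)$, giving $\zb^k(t)=n\sum_{s=0}^{t-1}A(t-1:s+1)\,u_k(s)\,\eb_k$. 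Reading off the $i$th component, and recalling that $A(t-1:s+1)\eb_k$ selects the $k$th column, produces the claimed formula. I would make this rigorous by induction on $t$, with the convention $A(t-1:t)=I$ supplying the $s=t-1$ boundary summand.

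The computations themselves are elementary, so the only point requiring genuine care is the index bookkeeping in part (b): correctly identifying which matrices multiply the term injected at time $s$ and confirming that the convention $A(t-1:t)=I$ reproduces the correct leading term. Checking the base cases $t=1$ and $t=2$ against the formula is the cleanest way to pin down the indices before running the induction, and it is the step I would be most careful to verify.
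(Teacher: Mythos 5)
Your proposal is correct and follows essentially the same route as the paper: part (a) by averaging the dual update over $i$ and invoking column stochasticity of $A(t)$, and part (b) by stacking the update into the vector recursion $\zb^k(t+1)=A(t)\zb^k(t)+nu_k(t)\eb_k$ and unrolling from $\zb^k(0)=0$ with the convention $A(t-1:t)=I$. Your index bookkeeping for the propagation matrices $A(t-1:s+1)$ matches the paper's, so no further changes are needed.
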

\begin{proof}
\begin{itemize}
\item[(a)] From relation (\ref{eqn:algo2}), we have for all $k \in [n]$
\begin{align*}
\bar{z}^k(t+1) = &~\frac{1}{n}\sum_{i=1}^nz_i^k(t+1)\\
=&~\frac{1}{n}\sum_{i=1}^n \left[n\delta_i^ku_i(t) + \sum_{j=1}^n [A(t)]_{ij}z_j^k(t)\right]\\
=&~u_k(t) + \frac{1}{n}\sum_{j=1}^n z_j^k(t)\sum_{i=1}^n[A(t)]_{ij}\\
=&~u_k(t) + \bar{z}^k(t),
\end{align*}
where the last equality follows from the column-stochasticity of the matrix $A(t)$.
The desired result follows by stacking up the scalar relation above over $k$.
\item[(b)]
By stacking up the equation (\ref{eqn:algo2}) over $i$, we have for all $t \ge 1$ and $k \in [n]$
\begin{align*}
\zb^k(t+1) = A(t)\zb^k(t) + nu_k(t)\eb_k.
\end{align*}
By unrolling this equation from time 0 to $t$, we obtain
\begin{align*}
\zb^k(t) = &~A(t-1:0)\zb^k(0) \\
&~+ n\sum_{s=0}^{t-1}u_k(s) A(t-1:s+1)\eb_k\\
= &~ n\sum_{s=0}^{t-1}u_k(s) A(t-1:s+1)\eb_k,
\end{align*}
where the equalities follows from $A(t-1:t) = I$ and the initial condition $\zb_i(0) = 0$ for all $i \in \Vc$.
We get the desired result by taking the $i$-th component of this vector. $\square$
\end{itemize}
\end{proof}
\sml{Lemma \ref{lem:zbar} tells us that the vector $\bar\zb(t)$ acts as a ``mean field'' of the dual iterates $\zb_i(t)$.
Also, if we define
\[
\bar\xb(t+1) \triangleq \Pi^\psi_{\sX^n}\paren{\bar{\zb}(t+1),\a(t)},
\]
then from Lemma \ref{lem:zbar}(a) we can see that
\[
\bar\xb(t+1) \triangleq \Pi^\psi_{\sX^n}\paren{\sum_{s=1}^t \ub(s),\a(t)},
\]
which coincides with relation \eqref{eq:xu} in Theorem \ref{thm:main}.
}

We now particularize the bound in Theorem \ref{thm:main} in this scenario under the additional assumption on the Lipschitz continuous gradients (Assumption \ref{assume:Lipgrad} in Section \ref{sec:algo-regret}).
\begin{theorem}\label{thm:4}
Under Assumptions \ref{assume:Lipschitz}-\ref{assume:Lipgrad},
the regret of the algorithm (\ref{eqn:algo1})-(\ref{eqn:algo3})
with the local update $u_i(t)$ of agent $i$ computed according to (\ref{eqn:policy1})
can be upper-bounded as follows: for all $T\ge 1$,
\begin{align*}
R(T) \le~& \frac{nL^2}{2}\sum_{t=1}^T \a(t-1)+ \frac{C}{\a(T)}\nonumber\\
~&+ \paren{L + \sqrt{n}GD_{X}}\sum_{t=1}^T \a(t-1)\sum_{i=1}^n\norm{\frac{\zb_i(t)}{w_i(t)}-\bar{\zb}(t)}.
\end{align*}
\end{theorem}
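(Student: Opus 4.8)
The plan is to apply Theorem~\ref{thm:main} essentially verbatim, the only new ingredient being that the push-sum normalization forces the primal point to be the \emph{ratio} $\zb_i(t)/w_i(t)$ rather than $\zb_i(t)$ itself. First I would check that the hypotheses of Theorem~\ref{thm:main} are satisfied. By Lemma~\ref{lem:zbar}(a), the mean field $\bzb(t)=\frac{1}{n}\sum_{i=1}^n\zb_i(t)$ obeys $\bzb(t+1)=\bzb(t)+\ub(t)$, so unrolling from $\bzb(0)=0$ (which follows from $\zb_i(0)=0$) gives $\bzb(t+1)=\sum_{s=0}^t\ub(s)$. Consequently, defining $\bxb(t+1)\deq\Pi^\psi_{\sX^n}(\bzb(t+1),\a(t))$ reproduces exactly relation~\eqref{eq:xu}, and the generic bound of Theorem~\ref{thm:main} applies with this choice of $\bxb(t)$.

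Next I would estimate the three error terms (E1)--(E3), following the proof of Theorem~\ref{thm:local_grad_signals}. Term (E1) is handled identically: since each $f_t\in\Fc$ is $L$-Lipschitz and $\ub(t)$ is assembled from coordinate gradients via~\eqref{eqn:policy1}, one has $\norm{\ub(t)}^2=\sum_{i=1}^n\abs{\la\nabla f_t(\xb_i(t)),\eb_i\ra}^2\le\sum_{i=1}^n\norm{\nabla f_t(\xb_i(t))}^2\le nL^2$, which yields the leading term $\frac{nL^2}{2}\sum_t\a(t-1)$. For (E3), Assumption~\ref{assume:Lipgrad} gives $\norm{\nabla f_t(\bxb(t))-\ub(t)}\le G\sum_{i=1}^n\norm{\bxb(t)-\xb_i(t)}$, exactly as in the ODA-C analysis.

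The single place where the push-sum structure enters is the bound on $\norm{\bxb(t)-\xb_i(t)}$. Here $\xb_i(t)=\Pi^\psi_{\sX^n}\paren{\zb_i(t)/w_i(t),\a(t-1)}$ by~\eqref{eqn:algo3}, whereas $\bxb(t)=\Pi^\psi_{\sX^n}(\bzb(t),\a(t-1))$. Invoking the $\a$-Lipschitz continuity of the map $\zb\mapsto\Pi^\psi_{\sX^n}(\zb,\a)$ (\cite[Lemma~1]{Nesterov_dual_averaging}) then gives $\norm{\bxb(t)-\xb_i(t)}\le\a(t-1)\norm{\bzb(t)-\zb_i(t)/w_i(t)}$, which is precisely the ODA-C estimate with $\zb_i(t)$ replaced by its normalized counterpart $\zb_i(t)/w_i(t)$. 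Substituting this into (E2) (with prefactor $L$) and into (E3) (with prefactor $\sqrt{n}GD_\sX$) and collecting the common factor $\paren{L+\sqrt{n}GD_\sX}\sum_t\a(t-1)\sum_i\norm{\zb_i(t)/w_i(t)-\bzb(t)}$ produces the stated bound.

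I expect no genuine obstacle here: the argument is a near-verbatim adaptation of the proof of Theorem~\ref{thm:local_grad_signals}. The only subtlety worth flagging is the normalization by $w_i(t)$, which is exactly the correction the push-sum protocol requires because the mixing matrix $A(t)$ is column-stochastic rather than doubly stochastic. This is why the disagreement appears in the form $\norm{\zb_i(t)/w_i(t)-\bzb(t)}$, and controlling this ratio term is deferred to a subsequent lemma on the convergence of the push-sum weights.
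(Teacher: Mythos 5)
Your proposal is correct and follows essentially the same route as the paper: invoke Theorem~\ref{thm:main} via Lemma~\ref{lem:zbar}(a), reuse the (E1) and (E3) estimates from the proof of Theorem~\ref{thm:local_grad_signals}, and modify only the disagreement bound to $\norm{\bxb(t)-\xb_i(t)}\le\a(t-1)\norm{\bzb(t)-\zb_i(t)/w_i(t)}$ using the $\a$-Lipschitz property of $\Pi^\psi_{\sX^n}(\cdot,\a)$, exactly as in the paper's Eq.~\eqref{eqn:thm222}. Your remark about the $w_i(t)$ normalization being the push-sum correction for column-stochasticity is also consistent with the paper's treatment.
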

\begin{proof}
Since the definition of $\ub(t)$ in \textbf{ODA-PS} (cf. Eq. \eqref{eqn:policy1}) coincides with that
in \textbf{ODA-C} (cf. Eq. \eqref{eqn:gradient_estimate}), we can reuse all the derivations in the proof of Theorem \ref{thm:local_grad_signals} except for the network-wide disagreement term:
	\begin{align} \label{eqn:thm222}
		&\| \bxb(t) - \xb_i(t) \| \nonumber\\
		&\quad = \left\| \Pi^\psi_{\sX^n}(\bzb(t),\alpha(t-1)) - \Pi^\psi_{\sX^n}\paren{\frac{\zb_i(t)}{w_i(t)},\alpha(t-1)} \right\| \nonumber \\
		&\quad \le \alpha(t-1) \norm{ \bzb(t) - \frac{\zb_i(t)}{w_i(t)} },
 	\end{align}
where the last inequality follows from
the $\alpha$-Lipschitzian property of the map $\zb \mapsto \Pi^\psi_{\sX^n}(\zb,\alpha)$ \cite[Lemma~1]{Nesterov_dual_averaging}.
$\square$
\end{proof}
\sml{This bound tells us that the regret $R(T)$ will be sublinear in $T$ with proper choice of the step size $\a(t)$ if the network-wide disagreement term behaves nicely. Note that we can also make use of Corollary \ref{cor:regret_bound}
here if we can show
	\begin{align*}
		\sum^n_{i=1} \norm{ \bzb(t) - \frac{\zb_i(t)}{w_i(t)} } \le K,
	\end{align*}
for some constant $K>0$.
}

\subsection{Full regret analysis}
We now show that the network-wide disagreement term in Theorem \ref{thm:4} is indeed upper-bounded by some constant. For doing this, we first restate a lemma from \cite{NO}. 
\begin{lemma}\label{lem:NO}
Let the graph sequence $\{\Gc(t)\}$ be $B$-strongly connected. Then the following statements are valid.
\begin{itemize}
\item[(a)] There is a sequence $\{\phi(t)\} \subseteq \mathbb{R}^n$ of stochastic vectors such that
the matrix difference $A(t:s)-\phi(t)\1b'$ for $t \ge s$ decays geometrically, i.e., for all $i, j \in [n]$.
\[
\abs{[A(t:s)]_{ij}-\phi_i(t)} \le \beta\t^{t-s} \quad \text{for all } t \ge s \ge 0,
\]
where we can always choose
\[
\beta = 4, \quad \t = (1-1/n^{nB})^{1/B}.
\]
If in addition each $\Gc(t)$ is regular, we may choose
\[
\beta = 2\sqrt{2},\quad \t = (1-1/4n^3)^{1/B},
\]
or
\[
\beta = \sqrt{2},\quad \t = \max_{t\ge 0} \sigma_2(A(t)),
\]
whenever $\sup_{t\ge 0}\sigma_2(A(t))<1$.
\item[(b)]
The quantity
\[
\gamma = \inf_{t \ge 0} \paren{\min_{1\le i\le n}[A(t:0)\1b]_i}
\]
satisfies
\[
\gamma \ge \frac{1}{n^{nB}}.
\]
Moreover, if the graphs $\Gc(t)$ are regular, we have $\gamma = 1$.
\end{itemize}
\end{lemma}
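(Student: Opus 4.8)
Since Lemma~\ref{lem:NO} is quoted essentially verbatim from \cite{NO}, the plan is to follow the ergodicity-based analysis developed there for products of column-stochastic matrices; I sketch its structure. The starting point is to record three structural facts about the weights in \eqref{eqn:At}: each $A(t)$ is column-stochastic, so $\1b' A(t:s) = \1b'$ for every $t \ge s$ and hence every column of $A(t:s)$ sums to one; every nonzero entry of $A(t)$ equals $1/d_j(t) \ge 1/n$; and the self-loops $(i,i)$ guarantee that the diagonal of each $A(t)$ is strictly positive, so that influence, once present, is never lost.

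For part~(a) I would first exploit $B$-strong connectivity to establish a scrambling property: over any window of length $nB$, the union-connectivity assumption together with the surviving self-loops forces a directed path between every ordered pair of nodes, so every entry of $A(t:s)$ is bounded below by the product of the weights along such a path, i.e.\ by at least $(1/n)^{nB} = 1/n^{nB}$ once $t-s \ge nB$. The limiting stochastic sequence $\{\phi(t)\}$ is then obtained as the common limit of the columns $A(t:s)\eb_j$ as the window length $t-s \to \infty$, independent of $j$ because the columns become asymptotically equal; convergence is guaranteed by the Cauchy property that the contraction below supplies, and the limit is a stochastic vector since each column sums to one. Geometric decay of $A(t:s) - \phi(t)\1b'$ follows from a Hajnal-type estimate on the coefficient of ergodicity: each window multiplies the spread of the columns by a factor at most $1 - 1/n^{nB}$, which compounds to $\theta^{t-s}$ with $\theta = (1 - 1/n^{nB})^{1/B}$, and absorbing the window-boundary effects into the prefactor yields $\beta = 4$. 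When each $\Gc(t)$ is regular, $A(t)$ is doubly stochastic, so the analysis can be run in the Euclidean norm and the contraction is governed directly by $\sigma_2(A(t))$, giving the sharper constants $\beta = 2\sqrt{2}$ and $\beta = \sqrt{2}$ with the corresponding $\theta$.

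For part~(b), observe that $[A(t:0)\1b]_i = \sum_j [A(t:0)]_{ij}$ is the $i$-th row sum of $A(t:0)$. I would track the minimum row sum over time: because each $A(t)$ has a strictly positive diagonal and nonzero entries at least $1/n$, and because the entrywise lower bound $1/n^{nB}$ from part~(a) applies once enough steps have elapsed, the quantity $\min_{1\le i\le n}[A(t:0)\1b]_i$ never drops below $1/n^{nB}$, whence $\gamma \ge 1/n^{nB}$. When the graphs are regular the matrices $A(t)$ are doubly stochastic, so $A(t:0)\1b = \1b$ and therefore $\gamma = 1$.

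The main obstacle is the rigorous construction of the limit sequence $\{\phi(t)\}$ together with the clean explicit constants $\beta$ and $\theta$: one must show that the columns $A(t:s)\eb_j$ converge as the window grows and that the limit is genuinely stochastic, and then carefully propagate the per-window contraction factor through the time-varying, merely column-stochastic (not doubly stochastic) products. This ergodicity-coefficient bookkeeping---rather than the elementary positivity and row-sum estimates---is the technical heart of the argument, and it is exactly the content of \cite{NO}.
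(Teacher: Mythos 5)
The paper gives no proof of this lemma at all---it is restated verbatim from \cite{NO} and used as an imported black box---so there is nothing internal to compare against. Your sketch follows the same route as the cited source (entrywise positivity of $A(t:s)$ over windows of length $O(nB)$ from $B$-strong connectivity plus self-loops, a Hajnal-type ergodicity-coefficient contraction to build $\phi(t)$ and get the geometric decay with explicit $\beta,\theta$, and the row-sum/doubly-stochastic observations for part (b)), and is a correct outline of that argument, with the honest caveat you already note that the explicit constants and the construction of $\phi(t)$ are the technical content carried by \cite{NO} rather than by your sketch.
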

The next lemma provides an upper-bound for
$\sum_{i=1}^n \left\|\frac{\zb_i(t)}{w_i(t)}-\bar{\zb}(t)\right\|^2$.
\begin{lemma}\label{lem:network}
Let the sequences $\{\zb_i(t)\}$ and $\{w_i(t)\}$ be generated according to the algorithm (\ref{eqn:algo1})-(\ref{eqn:algo2}).
Recall that $\bar{\zb}(t) = \frac{1}{n}\sum_{i=1}^n \zb_i(t)$.
Then, we have for all $t \ge 1$,
\begin{equation*}
\sum_{i=1}^n\norm{\frac{\zb_i(t)}{w_i(t)}-\bar{\zb}(t)}^2 \le  n^2\paren{\frac{2\beta L}{\gamma\t(\t-1)}}^2,
\end{equation*}
where the constants $\beta$, $\gamma$ and $\t$ are as defined in Lemma \ref{lem:NO}.
\end{lemma}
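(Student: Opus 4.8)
The plan is to make the push-sum ratio $\zb_i(t)/w_i(t)$ explicit and show that it tracks the mean field $\bzb(t)$ with a geometrically controlled error. First I would record the three ingredients already available. By Lemma~\ref{lem:zbar}(b), $z_i^k(t) = n\sum_{s=0}^{t-1}[A(t-1:s+1)]_{ik}u_k(s)$; by Lemma~\ref{lem:zbar}(a) together with $\zb_i(0)=\0b$, the mean field satisfies $\bz^k(t)=\sum_{s=0}^{t-1}u_k(s)$; and unrolling \eqref{eqn:algo1} from $w_i(0)=1$ gives $w_i(t)=[A(t-1:0)\1b]_i$, whence $w_i(t)\ge\gamma$ by Lemma~\ref{lem:NO}(b). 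Since each $A(\cdot)$ is column stochastic, $\1b$ is a left eigenvector, so total mass is conserved: $\sum_{j=1}^n w_j(s)=n$ for every $s$.

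Writing the per-coordinate discrepancy over a common denominator gives $\frac{z_i^k(t)}{w_i(t)}-\bz^k(t)=\frac{1}{w_i(t)}\sum_{s=0}^{t-1}u_k(s)\paren{n[A(t-1:s+1)]_{ik}-w_i(t)}$, so everything reduces to bounding the bracket $n[A(t-1:s+1)]_{ik}-w_i(t)$. The key idea is to compare both terms to the common reference $n\phi_i(t-1)$, where $\phi(t-1)$ is the stochastic vector of Lemma~\ref{lem:NO}(a): that lemma gives $\abs{[A(t-1:s+1)]_{ik}-\phi_i(t-1)}\le\beta\t^{t-s-2}$ directly. For $w_i(t)$ the naive estimate $\abs{w_i(t)-n\phi_i(t-1)}\le n\beta\t^{t-1}$ is fatal, because this $s$-independent error multiplies $\sum_s u_k(s)$ and grows linearly in $t$. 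Instead I would exploit the semigroup identity $A(t-1:0)=A(t-1:s+1)A(s:0)$, which yields $w_i(t)=\sum_j [A(t-1:s+1)]_{ij}w_j(s+1)$; combining this with Lemma~\ref{lem:NO}(a) and mass conservation $\sum_j w_j(s+1)=n$ gives $\abs{w_i(t)-n\phi_i(t-1)}\le n\beta\t^{t-s-2}$, an error that now decays in $t-s$. Consequently $\abs{n[A(t-1:s+1)]_{ik}-w_i(t)}\le 2n\beta\t^{t-s-2}$.

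With this time-uniform geometric bound in hand the rest is routine. Using $\abs{u_k(s)}\le L$ (from Assumption~\ref{assume:Lipschitz}, since $u_k(s)=\la\nabla f_s(\xb_k(s)),\eb_k\ra$) and $w_i(t)\ge\gamma$, the geometric sum $\sum_{s=0}^{t-1}\t^{t-s-2}\le\frac{1}{\t(1-\t)}$ yields a per-coordinate bound $\abs{\frac{z_i^k(t)}{w_i(t)}-\bz^k(t)}\le\frac{2n\beta L}{\gamma\t(1-\t)}$, uniform in $t$, with $1-\t=\abs{\t-1}$. Squaring, summing over the $n$ coordinates $k$ to form $\norm{\frac{\zb_i(t)}{w_i(t)}-\bzb(t)}^2$, and then summing over the $n$ agents $i$ collects the polynomial-in-$n$ factor and produces a bound of the form claimed in the lemma. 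The only delicate point is the one flagged above: forcing the error for $w_i(t)$ to decay in $t-s$ rather than sit as a fixed offset. Without the semigroup identity and mass conservation the argument produces a term growing linearly in $t$ and no time-uniform constant can be extracted; everything else is a single geometric series together with the two elementary estimates $\abs{u_k}\le L$ and $w_i\ge\gamma$.
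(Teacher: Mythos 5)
Your argument follows the paper's proof almost step for step: both start from the explicit representations $z_i^k(t)=n\sum_{s=0}^{t-1}[A(t-1:s+1)]_{ik}u_k(s)$, $\bar z^k(t)=\sum_{s=0}^{t-1}u_k(s)$, and $w_i(t)=[A(t-1:0)\1b]_i\ge\gamma$, put the per-coordinate discrepancy over the common denominator, compare the matrix entries to the reference vector $\phi(t-1)$ of Lemma~\ref{lem:NO}(a), bound $|u_k(s)|\le L$, and sum a geometric series. The one place you diverge is the treatment of $w_i(t)$: the paper uses the direct estimate $\abs{[A(t-1:0)]_{i\ell}-\phi_i(t-1)}\le\beta\theta^{t-1}$ and then simply notes that $\theta^{t-1}\le\theta^{t-s-2}$ for every $s\in\{0,\dots,t-1\}$, which folds the $s$-independent offset into the same geometric series; you instead invoke the semigroup identity $A(t-1:0)=A(t-1:s+1)A(s:0)$ together with mass conservation to make that error decay in $t-s$. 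Your variant is valid (and arguably cleaner), but your claim that the direct estimate is ``fatal'' because the error ``grows linearly in $t$'' is incorrect: the offending term is at most $(nL\beta/\gamma)\,t\,\theta^{t-1}$, which is bounded uniformly in $t$ since $\theta<1$, and in any case the elementary inequality $\theta^{t-1}\le\theta^{t-s-2}$ disposes of it without extra machinery. The semigroup step is a nice alternative, not a necessity.

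One point to flag: your bookkeeping gives the per-coordinate bound $\abs{z_i^k(t)/w_i(t)-\bar z^k(t)}\le 2n\beta L/(\gamma\theta(1-\theta))$, which after squaring and summing over $i$ and $k$ yields $n^4\paren{2\beta L/(\gamma\theta(1-\theta))}^2$ rather than the $n^2\paren{\cdot}^2$ asserted in the lemma; calling this ``a bound of the form claimed'' glosses over a factor of $n^2$. The discrepancy actually traces to the paper's own proof, which drops the factor $n$ arising from the sums $\sum_{\ell=1}^n(\cdots)$ in its numerators when it passes to the bound $(\beta\theta^{t-s-2}+\beta\theta^{t-1})/\gamma$, so your constant appears to be the honest one. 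Either way the bound is a $t$-independent constant, which is all that is needed downstream in Theorem~\ref{thm:linear_ave2}.
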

\begin{proof}
From the definitions of $\zb_i(t)$, $\bar{\zb}(t)$ and $\zb^k(t)$, we have
\begin{align}\label{eqn:equ}
\sum_{i=1}^n\norm{\frac{\zb_i(t)}{w_i(t)}-\bar{\zb}(t)}^2
= \sum_{i=1}^n \sum_{k=1}^n \paren{\frac{z_i^k(t)}{w_i(t)}-\bar{z}^k(t)}^2.
\end{align}
Thus, we can upper-bound the quantity on the right-hand side.

By inspecting equation (\ref{eqn:algo1}), it is easy to see that for any $i \in \Vc$ and $t \ge 1$, we have
\[
w_i(t) = \sum_{\ell=1}^n[A(t-1:0)]_{i\ell} w_i(0) = \sum_{\ell=1}^n[A(t-1:0)]_{i\ell}.
\]
From this and Lemma \ref{lem:zbar}, we have the following chain of relations:
\begin{align}\label{eqn:lem41}
& \frac{z_i^k(t)}{w_i(t)}-\bar{z}^k(t) \nonumber\\
~= &\frac{n\sum_{s=0}^{t-1}[A(t-1:s+1)]_{ik}u_k(s)}{\sum_{\ell=1}^n[A(t-1:0)]_{i\ell}} - \sum_{s=0}^{t-1}u_k(s)\nonumber\\
~= &\sum_{s=0}^{t-1}u_k(s)\frac{\sum_{\ell=1}^n[A(t-1:s+1)]_{ik}-\sum_{\ell=1}^n[A(t-1:0)]_{i\ell}}{\sum_{\ell=1}^n[A(t-1:0)]_{i\ell}}\nonumber\\
~\le &\sum_{s=0}^{t-1}u_k(s) \Bigg(\frac{\sum_{\ell=1}^n\left([A(t-1:s+1)]_{ik}-\phi_i(t-1)\right)}{\sum_{\ell=1}^n[A(t-1:0)]_{i\ell}} \nonumber\\
~& + \frac{\sum_{\ell=1}^n\left(\phi_i(t-1)-[A(t-1:0)]_{i\ell}\right)}{\sum_{\ell=1}^n[A(t-1:0)]_{i\ell}}\Bigg) \nonumber\\
~\le &\sum_{s=0}^{t-1}u_k(s) \frac{\beta\t^{t-s-2}+\beta\theta^{t-1}}{\gamma},
\end{align}
where the inequalities follow from adding and subtracting $\phi_i(t-1)$
and from Lemma \ref{lem:NO}.
From relation (\ref{eqn:policy1}), we have
\[
|u_k(s)|^2 = |\la \nabla f_s(\xb_k(s)),\eb_k\ra|^2 \le \|\nabla f_s(\xb_k(s))\|^2\le L^2.
\]
Combining this and the fact that $\beta\t^{t-s-2}\ge \beta\theta^{t-1}$ for all $s = 0,\ldots,t-1$, we further have
\begin{align*}
\abs{\frac{z_i^k(t)}{w_i(t)}-\bar{z}^k(t)}
\le  \sum_{s=0}^{t-1}|u_k(s)| \frac{2\beta\t^{t-s-2}}{\gamma}
\le  \frac{2\beta L}{\gamma\t(\t-1)}.
\end{align*}
Substituting this estimate in relation \eqref{eqn:equ}, we get the desired result.
$\square$
\end{proof}
By combining Theorem \ref{thm:4} and Lemma \ref{lem:network}, we can now provide the regret bound of \textbf{ODA-PS}:
\begin{theorem}\label{thm:linear_ave2} \an{Let Assumptions~\ref{assume:Lipschitz}--\ref{assume:Lipgrad} hold.
With the choice $\alpha(t) = \frac{1}{\sqrt{t+1}}$ for all $t\ge0$,
and under  the policy \eqref{eqn:policy1}, the distributed algorithm \textbf{ODA-PS} achieves the following regret:}
\begin{align*}%\label{eqn:regret1}
R(T) &\le nL^2\paren{1+\paren{1 + \frac{\sqrt{n}GD_{\mathsf{X}}}{L}}\frac{4\beta \sqrt{n}}{\gamma\t(1-\t)}}\sqrt{T}\nonumber\\
& \qquad \qquad \qquad + C\sqrt{T+1}\nonumber,
\end{align*}
where the constants $\beta$, $\gamma$ and $\t$ are as defined in Lemma \ref{lem:NO}.
\end{theorem}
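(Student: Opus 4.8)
The plan is to mirror the argument used for \textbf{ODA-C} in Theorem~\ref{thm:linear_ave}: reduce the regret bound of Theorem~\ref{thm:4} to a uniform bound on the network-wide disagreement term, and then specialize the stepsize to $\alpha(t) = 1/\sqrt{t+1}$. The key structural observation, already flagged in the remark following Theorem~\ref{thm:4}, is that the bound of Theorem~\ref{thm:4} has exactly the same shape as the one in Theorem~\ref{thm:local_grad_signals} once we identify the push-sum disagreement term $\sum_{i=1}^n \norm{\zb_i(t)/w_i(t) - \bzb(t)}$ with the term $\sum_{i=1}^n \norm{\zb_i(t) - \bzb(t)}$ appearing in the \textbf{ODA-C} analysis. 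Hence, if I exhibit a constant $K$ bounding this term uniformly in $t$ and over every admissible sequence $f_1,\dots,f_T \in \Fc$, then Corollary~\ref{cor:regret_bound} applies with that $K$ (equivalently, one may substitute $K$ directly into the bound of Theorem~\ref{thm:4}).

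First I would invoke Lemma~\ref{lem:network}, which supplies the squared bound $\sum_{i=1}^n \norm{\zb_i(t)/w_i(t) - \bzb(t)}^2 \le n^2\paren{2\beta L/(\gamma\theta(1-\theta))}^2$ for every $t \ge 1$. To pass from this $\ell_2$ estimate to the $\ell_1$-type sum that actually appears in Theorem~\ref{thm:4}, I would apply the Cauchy--Schwarz (equivalently Jensen) inequality $\sum_{i=1}^n a_i \le \sqrt{n\sum_{i=1}^n a_i^2}$, which yields $\sum_{i=1}^n \norm{\zb_i(t)/w_i(t) - \bzb(t)} \le n^{3/2}\cdot 2\beta L/(\gamma\theta(1-\theta))$. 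This identifies the required constant $K = 2\beta L n^{3/2}/(\gamma\theta(1-\theta))$, uniform in $t$.

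With $K$ in hand, Corollary~\ref{cor:regret_bound} gives $R(T) \le \bigl[nL^2 + 2K(L + \sqrt{n}GD_\sX)\bigr]\sqrt{T} + C\sqrt{T+1}$ for $\alpha(t) = 1/\sqrt{t+1}$; here I would reuse the elementary estimates $\sum_{t=1}^T \alpha(t-1) = \sum_{t=1}^T t^{-1/2} \le 2\sqrt{T}$ and $1/\alpha(T) = \sqrt{T+1}$ already used in the corollary. Substituting $K$ and factoring $nL^2$ out of the bracket, one checks that $2K(L + \sqrt{n}GD_\sX) = nL^2\cdot \frac{4\beta\sqrt{n}}{\gamma\theta(1-\theta)}\paren{1 + \frac{\sqrt{n}GD_\sX}{L}}$, which reassembles into exactly the claimed coefficient of $\sqrt{T}$.

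The only real obstacle is the algebraic bookkeeping in this last step, in particular tracking the $n^{3/2}$ factor produced by Cauchy--Schwarz so that the net dependence on $n$ collapses to the $\sqrt{n}$ appearing inside the bracket multiplying $nL^2$. There is no deeper difficulty: Lemma~\ref{lem:network} (via the mixing estimates of Lemma~\ref{lem:NO}) has already absorbed the hard work of controlling the push-sum weights $w_i(t)$ and the geometric decay of the column-stochastic products $A(t:s)$ toward the stochastic vector $\phi(t)$, so the present argument is purely a matter of assembling previously established pieces.
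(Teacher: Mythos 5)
Your proposal is correct and follows essentially the same route as the paper's own proof: apply Jensen/Cauchy--Schwarz to the squared disagreement bound of Lemma~\ref{lem:network} to obtain $K = 2\beta L n^{3/2}/(\gamma\t(1-\t))$, then invoke Corollary~\ref{cor:regret_bound} with this $K$ and the stepsize $\alpha(t)=1/\sqrt{t+1}$. Your algebraic check of the coefficient (and your use of $(1-\t)$ rather than the paper's sign-inconsistent $(\t-1)$) is accurate, so nothing is missing.
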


\begin{proof}
By Jensen's inequality, we have
\begin{align*}
\sum_{i=1}^n \norm{\frac{\zb_i(t)}{w_i(t)}-\bar\zb(t)} \le \sqrt{n\sum_{i=1}^n \norm{\frac{\zb_i(t)}{w_i(t)}-\bar\zb(t)}^2}.
\end{align*}
Hence, using Lemma \ref{lem:network}, we can estimate the network-wide disagreement term as follows:
\begin{align*}
\sum_{i=1}^n \norm{\frac{\zb_i(t)}{w_i(t)}-\bar\zb(t)} & \le \sqrt{n^3\paren{\frac{2\beta L}{\gamma\t(\t-1)}}^2} \\
& = \frac{2\beta n^{3/2}L}{\gamma\t(\t-1)}.
\end{align*}
Thus, the conditions of Corollary \ref{cor:regret_bound} with this modified network-wide agreement hold with
\[
K = n\sqrt{n}\frac{2\beta L}{\gamma\t(\t-1)}.
\]
and the stated result follows.~$\square$
\end{proof}
\sml{The bound shows that, for any time-varying sequence of $B$-strongly connected digraphs, the worst-case regret of \textbf{ODA-PS}
is of order $O(\sqrt{T})$. The constants also capture the dependence on the properties of the underlying network, i.e., the number of nodes $n$ and as well as the connectivity period $B$.}
\vspace{-0.1in}

%%%%%%%%%%%%%%%%%%%%%%%%%%%
\section{Simulation Results\label{sec:sim}}
%%%%%%%%%%%%%%%%%%%%%%%%%%%

Consider the problem of estimating some target vector $\xb \in \mathbb{R}^p$
using measurements from a network of $n$ %mobile
sensors.
Each sensor $i$ is in charge of estimating a subvector $\xb_i \in \mathbb{R}^{p_i}$ of $\xb$,
where $p_i \ll p$ and $p = \sum_{i=1}^n p_i$ is some very large number.
An example includes the localization of multiple targets,
where in this case $\xb \in \mathbb{R}^p$ becomes a stacked vector of all target locations.
When there are a number of spatially dispersed targets,
we can certainly benefit from distributed %mobile
sensing.

The sensors are assumed to have a linear model of $r(\xb) = A\xb$,
where $A \in \mathbb{R}^{m\times p}$ and $m< p$.\footnote{{  Although target localization is usually formulated as a nonlinear estimation problem \cite{sourceloc}, for considerations of simplicity one often employs a linearized model using a first-order Taylor expansion around the measurements; see, e.g., \cite{Kleeman1995,Ponda2008}.}}
At each time $t$, each sensor $i \in \Vc$
estimates its portion $\xb_i(t)\in \mathbb{R}^{p_i}$ of the target vector $\xb \in \mathbb{R}^p$,
and then takes a measurement $q_t^i \in \mathbb{R}^{m_i}$,
which is corrupted by observation error and possibly by modeling error.
We assume all sources of errors can be represented as an additive noise, i.e.,
\[
\mathbf{q}_t = A\xb(t) + \zeta_t,
\]
where $\mathbf{q}_t \in \mathbb{R}^m$  with $m = \sum_{i=1}^n m_i$ is a stacked vector of all $q_t^i$'s
and $\zeta_t \sim N(0,P)$, where $P$ is the noise covariance matrix.

The regret is computed with respect to the least-squares estimate of the target locations at time $T$, i.e.,
\[
\hat{\mathbf{x}} = \argmin_{\mathbf{x}\in \mathsf{X}^p}\sum_{t=1}^T f_t(\xb),
\]
where
$
f_t(\xb) = \frac{1}{2}\|A\xb - \mathbf{q}_t\|^2.
$
and we set $\mathsf{X} \in [-20,~20]$.

For \textbf{ODA-C}, we experiment with a $n=5$ node cycle graph whose communication topology is given as:
\[
1 \leftrightarrow 2 \leftrightarrow 3 \leftrightarrow 4\leftrightarrow 5 \leftrightarrow 1
\]
We set $r_i = 1/5$, $M_{ii} = 1/2$ for all $i$, and $M_{ij} = 1/4$ if $i \leftrightarrow j$.
For \textbf{ODA-PS}, we experiment with a time-varying sequence of digraphs with $n=5$ nodes whose communication topology is changing periodically with period $3$. The graph sequence is, therefore, $3$-strongly connected. In Figure \ref{fig:graph}, we depict the repetition of the 3 corresponding graphs.
The averaging matrices $A(t)$ (cf. Eq. \eqref{eqn:At}) can be determined accordingly.
We ran our algorithms once for each $T \in [1000]$. That is, for a given $T$, the iterates in the algorithms are updated from $t = 1$ to $t = T$. %Note that the optimal point $x^*$ in (\ref{eqn:xopt}) also depends on the value of $T$.
We used step size $\a(t) = \frac{1}{\sqrt{t+1}}$ for both algorithms.

\begin{figure}[t]
\begin{center}
\includegraphics[scale=0.37]{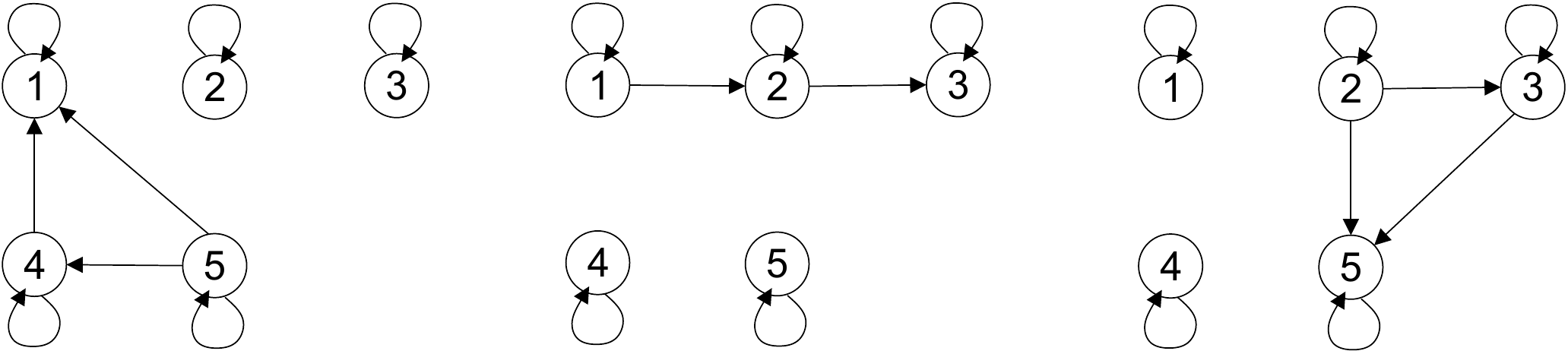}
\end{center}
\caption{Time-varying communication topology changing in cycle of three used for \textbf{ODA-PS}\label{fig:graph}}
\end{figure}

\begin{figure}[t]
\begin{center}
\includegraphics[scale=0.23]{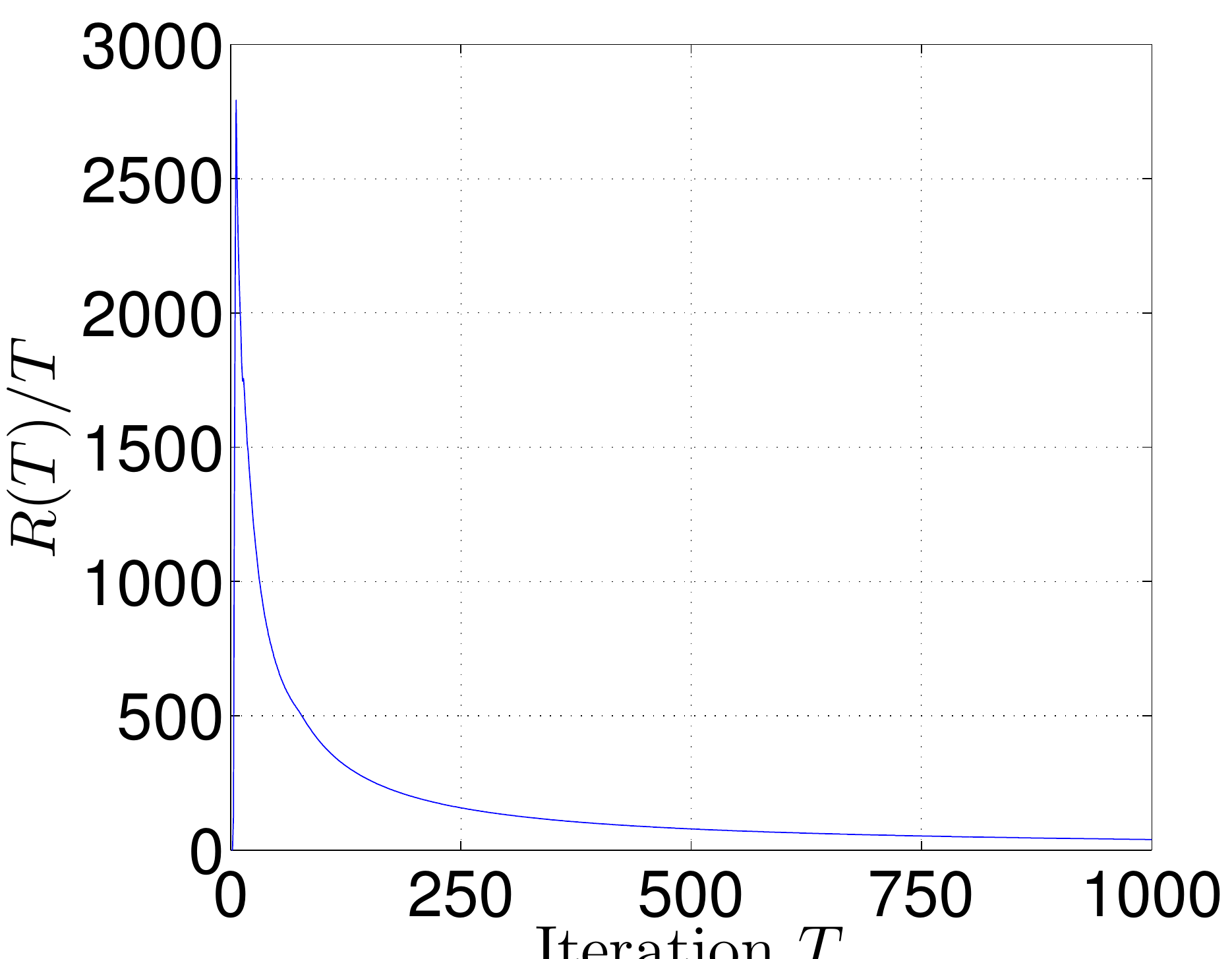}
\includegraphics[scale=0.23]{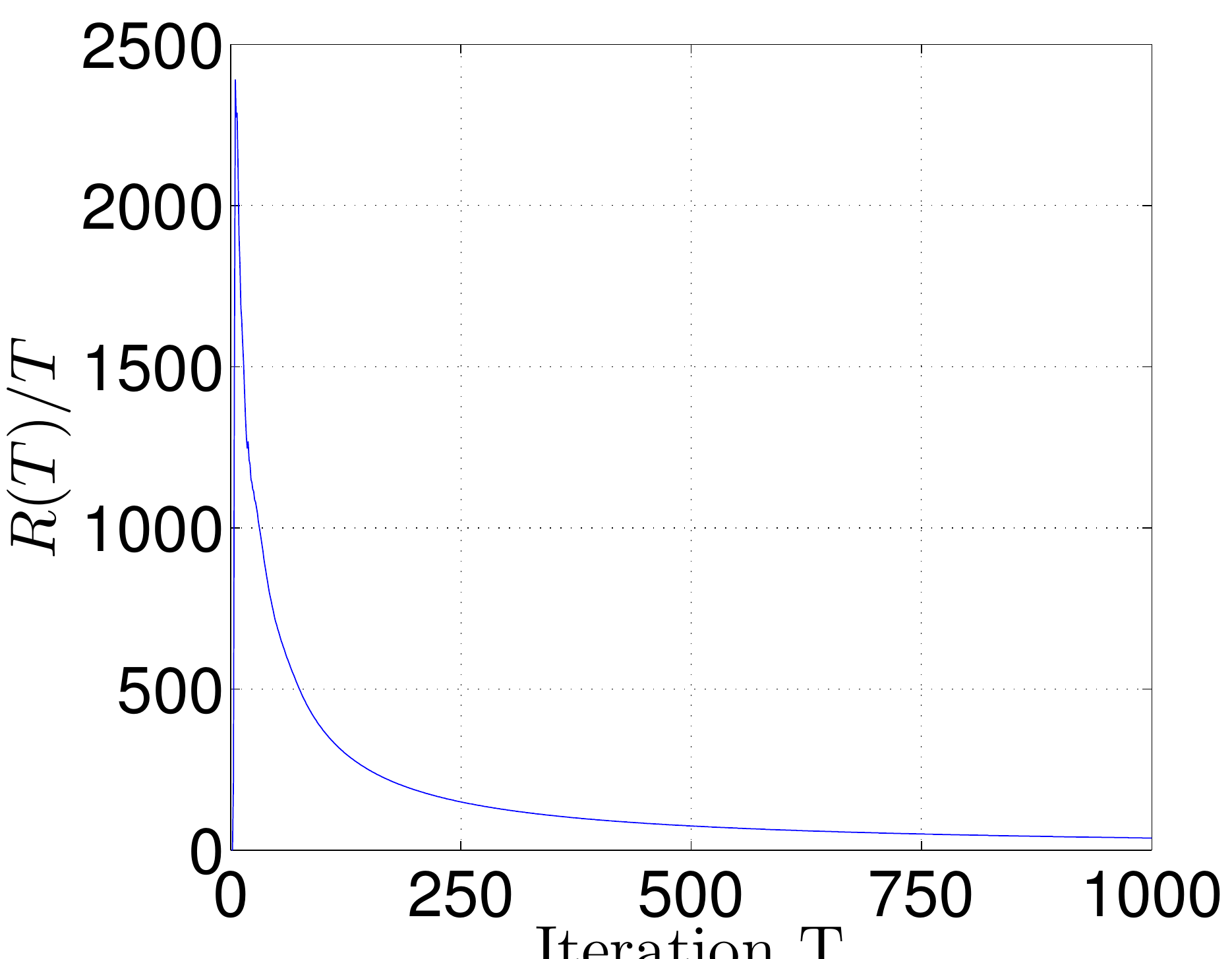}
\end{center}
\caption{The Average Regret $R(T)/T$ vs. Iterations for Online Distributed Active Sensing using \textbf{ODA-C} (left) and \textbf{ODA-PS} (right) \label{fig:Sensing}}
\end{figure}
In Figure \ref{fig:Sensing}, we depict the average regret $R(T)/T$ over time $T$ of the distributed sensing problem when \textbf{ODA-C} and \textbf{ODA-PS} are used, respectively.
It shows that the regret is sublinear for both algorithms and the average $R(T)/T$ goes to zero as the time increases.

%%%%%%%%%%%%%%%%%%%%%%%%%%%
\section{Conclusion}\label{sec:conclusion}
We have studied an online optimization problem in a multiagent network.
We proposed two decentralized variants of Nesterov's primal-dual algorithm, namely,
\textbf{ODA-C} using circulation-based dynamics for time-invariant networks
and \textbf{ODA-PS} using broadcast-based push-sum protocol for time-varying networks.
We have established a generic regret bound and provided its refinements for certain information exchange policies.
The regret is shown to grow as $O(\sqrt{T})$
when the step size is $\alpha(t)=1/\sqrt{t+1}$.
For \textbf{ODA-C}, the bound is valid for a static connectivity graph and a row-stochastic matrix of weights $M = [M_{ij}]$ which is reversible with respect to a strictly positive probability vector $\rb$.
For \textbf{ODA-PS}, the bound is valid for a uniformly strongly connected sequence of digraphs and column-stochastic matrices of weights $A(t)$ whose components are based on the out-degrees of neighbors.
Simulation results on a sensor network exhibit the desired theoretical properties of the two algorithms.
%%%%%%%%%%%%%%%%%%%%%%%%%%%
\vspace{-0.1in}

% Generated by IEEEtran.bst, version: 1.13 (2008/09/30)

\end{document}